\documentclass[12pt]{article}
\usepackage{graphicx} 
\usepackage[english]{babel}
\usepackage{enumitem}
\usepackage{amsfonts}
\usepackage{latexsym} 
\usepackage{amsmath}
\usepackage{float}
\usepackage{geometry}
\usepackage{amsmath}
\usepackage{babel}
\usepackage{color}
\newtheorem{thm}{Theorem}
\newtheorem{lem}{Lemma}

\usepackage{url}
\begin{document}
\title{Multi-time-stepping splitting scheme with mass lumping for the multiscale wave equation}
  	
   \author{Yating Wang \thanks{School of Mathematics and Statistic, Xi'an Jiaotong University, Xi'an, People's Republic of China.}
   	  	\and Wing Tat Leung \thanks{Department of Mathematics, City University of Hong Kong, Hong Kong Special Administrative Region. }
  	\and Zhengya Yang \thanks{School of Mathematics and Statistic, Xi'an Jiaotong University, Xi'an, People's Republic of China.}
  	  \and Shufan Li \thanks{Department of Mathematics, City University of Hong Kong, Hong Kong Special Administrative Region.}}
  
 \maketitle

\begin{abstract}
    Efficient simulation of wave propagation in high-contrast media is challenging because standard explicit schemes are subject to severe time-step restrictions induced by fine spatial scales and fast multiscale modes, whereas fully implicit schemes require costly globally coupled solves. In this work, we develop a mass-lumped multi-time-stepping partially explicit splitting method for multiscale wave equations. Based on a nonlocal multicontinuum (NLMC) decomposition of the approximation space, the solution is separated into contrast-dependent fast components and contrast-independent slow components, which are advanced implicitly and explicitly, respectively. A mass-lumping approximation diagonalizes the coarse-scale mass operator and removes the coupling between two subproblems, thus enabling multirate time integration for the two parts. To further improve efficiency, we introduce a residual-based adaptive strategy that dynamically adjusts the time step of the explicitly treated subsystem according to its temporal error indicator. The proposed scheme is shown to be energy stable under suitable assumptions. Numerical experiments demonstrate that the proposed mass-lumped multi-time-stepping scheme substantially reduces computational cost while maintaining accuracy comparable to that of standard fully implicit methods.
\end{abstract}
\section{Introduction}
\label{section:Introduction}

Multiscale wave equations appear in wide applications such as hydrocarbon exploration, marine engineering, and geosciences \cite{multiscalewave1,multiscalewave2,multiscalewave3,multiscalewave4}. These models typically involve significant spatial variations in material or medium properties. These high contrast features poses considerable challenges for numerical simulation. It can induce numerical oscillations and demands sufficiently fine meshes to accurately resolve the underlying physical features, which substantially increases the computational cost.

A broad class of spatial multiscale methods, including multiscale finite element methods (MsFEM) \cite{msfem1,msfem2,msfem3}, generalized multiscale finite element methods (GMsFEM) \cite{gmsfem1,gmsfem2,gmsfem3}, localized orthogonal decomposiiton (LOD) \cite{lod1, lod2, lod3}, constraint energy-minimizing GMsFEM (CEM-GMsFEM) \cite{cem1,cem2,cem3,cem4}, and nonlocal multi-continua (NLMC) methods\cite{nlmc2,nlmc3}, has been developed to represent unresolved heterogeneities on coarse grids. For time-dependent multiscale problems, spatial model reduction alone does not necessarily remove the stiffness associated with fast solution components. In a standard explicit discretization, the admissible time step is constrained by the largest eigenvalues of the discrete operator. In wave problems, high-contrast velocity fields can impose severe stability restrictions, while high-frequency forcing demands fine temporal resolution, together leading to prohibitively small time steps. Fully implicit schemes relax this restriction, but require the solution of large and globally coupled linear systems at every time step. Moreover, the inverse of the associated matrix is generally nonlocal, introducing global numerical coupling despite the finite propagation speed of the underlying wave equation. These competing requirements motivate hybrid time discretizations that treat only the genuinely stiff components implicitly while retaining explicit updates for the remaining dynamics. 

The partially explicit splitting framework was first introduced in \cite{splitting1}. Its central idea is to decompose the multiscale approximation space into a contrast-dependent fast, or stiff, subspace and a contrast-independent slow subspace, which are advanced implicitly and explicitly, respectively. Under suitable conditions on the space decomposition, the remaining stability restriction is governed by the coarse-scale slow modes rather than by the coefficient contrast. This framework has subsequently been extended to nonlinear equations \cite{splitting1,splitting2,splitting4,splitting7}, fractional differential equations \cite{splitting3,splitting6}, and wave equations \cite{splitting5,li2025masslumping}. 

Though the partially explicit construction removes contrast dependence, the resulting subproblems remain coupled through the mass matrix for the wave equation. Consequently, the explicit component may still involve a coupled mass solve, and componentwise multirate integration is not straightforward. To overcome this bottleneck, mass-lumping approximation of the discrete $L^2$ inner product is needed \cite{li2025masslumping}. The resulting diagonal mass operator in the multiscale coordinates eliminates the cross-subspace mass coupling and avoids repeated mass-matrix inversions in the explicit update. 

Moreover, when the different components evolve on distinct characteristic time scales, it is inspiring to apply different time stepping on the mass-lumping partially explicit splitting scheme to further improve the efficiency. Multirate time integration has attracted increasing attention as an efficient strategy for systems whose components evolve on widely separated temporal scales \cite{multirate1,multirate2, Gallistl2024}. Instead of advancing the entire system with a single global time step, multirate methods assign different step sizes to individual subsystems according to their stiffness and dynamical activity. This structure is naturally compatible with partially explicit splitting, in which the fast and slow components receive different temporal treatments.% and therefore motivates the multirate partially explicit splitting framework developed in this work. %Moreover, a fixed multirate strategy may still incur substantial computational cost in long-time simulations or when stringent temporal accuracy is required.

In the present work, we construct a decomposition of the multiscale space into fast and slow subspaces and formulate a mass-lumped partially explicit scheme that decouples the resulting subproblems. Building on this formulation, we develop an adaptive multirate algorithm. The residual indicator quantifies the local temporal error in the subsystem and triggers refinement of its time step when necessary. Consequently, each component is advanced on a time scale commensurate with its dynamics, while contrast-independent stability is retained under the stated conditions and the temporal error is effectively controlled. We define an appropriate discrete energy and show the stability of multi-time-stepping strategy. Numerical experiments demonstrate that the combined use of mass lumping and adaptive multi-time-stepping substantially reduces the overall computational cost, particularly for long-time simulations, while maintaining comparable accuracy.

The layout of the paper is as follows. We present preliminaries in section \ref{section: Problem setting}. We give the construction of multiscale subspaces and the mass lumping method in section \ref{section: Construction of two multiscale subspaces}. In section \ref{section: Multirate partially explicit temporal splitting scheme}, we present the multirate method, analyze the stability result in subsections \ref{section: Stability13} and \ref{section: Stability2}, and present the adaptive multirate method in subsection \ref{section: Adaptive multirate time splitting algorithm}. Numerical tests are provided in section \ref{section: Numerical experiments}. Some concluding remarks are given in the final section.

\section{Problem setting}
\label{section: Problem setting}
Let $\Omega\subset\mathbb{R}^{d}$ be a bounded heterogeneous domain and $T>0$. We consider the following second order wave equation in $\Omega$ as
\begin{align}\label{B1}
\left\{\begin{aligned}
&\partial_{tt}u-\nabla\cdot(\kappa\nabla u)=f,\ \text{in}\ \Omega\times(0,T],\\
&u=0,\ \text{on}\ \partial\Omega\times(0,T],\\
&u(\cdot,0)=u_0(x),\ u_t(\cdot,0)=u_1(x),\ \text{at}\ t=0,
\end{aligned}\right.
\end{align}
where $\kappa\in L^{\infty}(\Omega)$ is a multiscale high contrast field. The initial conditions $u_0(x)$, $u_1(x)$ and the source term $f$ are given functions with suitable regularity.

The weak formulation of the problem \eqref{B1} reads as follows: find $u(\cdot,t)\in H_0^1(\Omega)$, such that
\begin{align}\label{B2}
(\partial_{tt}u,v)+a(u,v)=(f,v),\ \forall v\in H_0^1(\Omega),
\end{align}
where the bilinear form $a(\cdot,\cdot)$ is given by
\begin{align*}
a(u,v)=\int_{\Omega}\kappa\nabla u\cdot\nabla v,
\end{align*}
and we define the energy norm as $\Vert u\Vert_a=a(u,u)^{\frac{1}{2}}$.

Let $\mathcal{T}_{H}$ be a coarse partition of the computational domain $\Omega$ with mesh size $H$. We will construct multiscale basis functions on $\mathcal{T}_{H}$ and form the multiscale space $V_{H}\subset H_{0}^{1}(\Omega)$, the semi-discretization in space then leads to seek the approximation solution $u_{H}(\cdot, t)\in V_{H}$, such that
\begin{align}\label{B3}
(\partial_{tt}u_{H}(t),v)+a(u_{H}(t),v)=(f,v),\ \forall v\in V_{H}.
\end{align}

For the temporal discretization, we consider a fixed time step $\Delta \tau=T/N$ where $N$ is the number of time steps, and the second order central difference scheme in time reads
\begin{align*}
\left(\frac{u_{H}^{n+1}-2u_{H}^{n}+u_{H}^{n-1}}{\Delta \tau^2},v\right)+\frac{1}{4}a(u_{H}^{n+1}+2u_{H}^{n}+u_{H}^{n-1},v)=(f^{n+1},v),\ \forall v\in V_{H},
\end{align*}
for all $n=1,2,\cdots,N-1$, where $u_H^n\approx u(\cdot,t^n)$. It is well known that this scheme is unconditionally stable.

\section{Partially explicit splitting scheme with mass lumping}
\label{section: Construction of two multiscale subspaces}

Now we assume that the approximation space $V_H$ can be decomposed into two subspaces namely 
\begin{align*}
V_H=V_{H,1}+V_{H,2},
\end{align*}
then a partially explicit temporal splitting scheme reads: finding $\{u_{H,1}^{n+1}\}_{n=0}^{N-1}\in V_{H,1}$ and $\{u_{H,2}^{n+1}\}_{n=0}^{N-1}\in V_{H,2}$, such that for all $v_{1}\in V_{H,1}$ and $v_2\in V_{H,2}$, we have
\begin{align}\label{B4}
\begin{aligned}
&\left(\frac{u_{H}^{n+1}-2u_{H}^{n}+u_{H}^{n-1}}{\Delta \tau^2},v_1\right)+\frac{1}{2}a(u_{H,1}^{n+1}+u_{H,1}^{n-1}+2u_{H,2}^{n},v_1)=(f^{n+1},v_1),\\
&\left(\frac{u_{H}^{n+1}-2u_{H}^{n}+u_{H}^{n-1}}{\Delta \tau^2},v_2\right)+a(u_{H,1}^{n}+u_{H,2}^{n},v_2)=(f^{n+1},v_2).
\end{aligned}
\end{align}
Thus the solution at time step $n+1$ will be $u_{H}^{n+1}=u_{H,1}^{n+1}+u_{H,2}^{n+1}$. It has been proved in \cite{splitting5} that under suitable choice of multiscale spaces $V_{H,1}$ and $V_{H,2}$, the scheme \eqref{B4} are stable and satisfy the energy conservation. However, the system is still coupled. To decouple the system, in this section, we will describe the construction of the multiscale finite element spaces such that the two equations can be solved separately by the mass lumping technique. 

We first construct the auxiliary spaces $V_{\text{aux,1}}$ and $V_{aux,2}$, then utilize them to construct the multiscale subspaces $V_{H,1}$ and $V_{H,2}$. Let $\{K_i\}_{i=1}^{N_c}$ be the set of coarse elements in $\mathcal{T}_{H}$ and $N_c$ be the number of coarse elements. Denote by $K_i^+$ an oversampling region obtained by enlarging $K_i$ a few coarse layers. Denote by $V=H_0^1(\Omega)$, and $V(K_i)$ the restriction of $V$ on $K_i$. 

%We first construct the auxiliary spaces $V_{\text{aux},1}$ and $V_{\text{aux},2}$. 
For each coarse element $K_i\in\mathcal{T}_{H}$, we consider 
\begin{align*}
K_1^{(i)}=\{x\in K_i\mid\kappa(x)\geq \kappa^*\},\ K_2^{(i)}=\{K_i \backslash K_1^{(i)}\},
\end{align*}
where $\kappa^*$ is a constant separating the high wave speed and low wave speed. We define the auxiliary basis functions of $V_{H,1}$ as $\psi_j^{(i)}=I_{K_j^{(i)}}(x), \;\; j=1,2,$ where $I_{K_j^{(i)}}$ denote the characteristic function of $K_j^{(i)}$. Then we define the first auxiliary space as 
\begin{align*}
V_{\text{aux},1}=\text{span}\{\psi_1^{(i)}, i = 1,\cdots,N_c\}.
\end{align*}

To obtain the auxiliary basis functions for $V_{H,2}$, we let $\widetilde{V}=\{v\in V\mid (v,\psi_j^{(i)})=0\}$, then find $\xi_j^{(i)} \in V(K_i)\cap\widetilde{V}$ and $\gamma_j^{(i)}\in\mathbb{R}$, such that $\forall v\in V(K_i)\cap\widetilde{V}$
\begin{align}\label{C1}
\int_{K_i}\kappa\nabla\xi_j^{(i)}\cdot\nabla v=\gamma_j^{(i)}\int_{K_i}\xi_j^{(i)}v.
\end{align}
Take the first $l_i$ eigenfunctions $\xi_j^{(i)}$, then the auxiliary space $V_{\text{aux},2}$ is 
\begin{align*}
V_{\text{aux},2}=\text{span}_{i,j}\{\psi_2^{(i)}, \xi_j^{(i)}, i = 1,\cdots,N_c, j=1,\cdots, l_i\}.
\end{align*}

Next, we construct the multiscale subspaces $V_{H,1}$ and $V_{H,2}$. We solve the following problem to obtain the multiscale basis functions $\phi_{j,1}^{(i)}$ of $V_{H,1}$: find $(\phi_{j,1}^{(i)},\mu_{j,1}^{(i)})\in V_0(K_i^+)\times(V_{\text{aux},1}+V_{\text{aux},2})$, such that
\begin{align}\label{C2}
\left\{\begin{aligned}
a(\phi_{j,1}^{(i)},v)+(\mu_{j,1}^{(i)},v)=&0,\ \forall v\in V_0(K_i^+),\\
(\phi_{j,1}^{(i)},\theta_1)=&(\psi_j^{(i)},\theta_1), \ \forall \theta_1 \in V_{\text{aux},1}, \\%&\delta_{il}\delta_{jk},\\
(\phi_{j,1}^{(i)},\theta_2)=&0, \ \forall \theta_2 \in V_{\text{aux},2}.
\end{aligned}\right.
\end{align}
Similarly, we solve the following problem to obtain the multiscale basis $\phi_{j,2}^{(i)}$ of $V_{H,2}$: find $(\phi_{j,2}^{(i)},\mu_{j,2}^{(i)})\in V_0(K_i^+)\times(V_{\text{aux},1}+V_{\text{aux},2})$, such that
\begin{align}\label{C3}
\left\{\begin{aligned}
a(\phi_{j,2}^{(i)},v)+(\mu_{j,2}^{(i)},v)=&0,\ \forall v\in V_0(K_i^+),\\
(\phi_{j,2}^{(i)},\theta_1)=&0, \ \forall \theta_1 \in V_{\text{aux},1},\\
(\phi_{j,2}^{(i)},\theta_2)=&(\xi_k^{(l)},\theta_2),\ \forall \theta_2 \in V_{\text{aux},2}.
\end{aligned}\right.
\end{align}
Finally, the multiscale finite element spaces $V_{H,1}$ and $V_{H,2}$ are defined as follows
\begin{align}\label{C4}
V_{H,1}=\text{span}_{i,j}\{\phi_{j,1}^{(i)}\},\ V_{H,2}=\text{span}_{i,j}\{\phi_{j,2}^{(i)}\}.
\end{align}
We remark that the basis in $V_{H,2}$ that correspond to the eigenfunctions $\xi_j^{(i)}$ are complementary and can improve the accuracy. For $u_H\in V_{H,1}+V_{H,2}$, we write $u_H=\sum_{i,j}u_{j,1}^{(i)}\phi_{j,1}^{(i)}+\sum_{i,j}u_{j,2}^{(i)}\phi_{j,2}^{(i)}$, then 
\begin{align*}
a(u_H,v)=-\left(\sum_{i,j}u_{j,1}^{(i)}\mu_{j,1}^{(i)},v\right)-\left(\sum_{i,j}u_{j,2}^{(i)}\mu_{j,2}^{(i)},v\right).
\end{align*}
Let $W_H = (V_{\text{aux},1}\oplus V_{\text{aux},2})$, for all $z\in W_H$, the weak formulation can be written as
\begin{align*}
(\sum_{i,j}(u_{j,1}^{(i)})_{tt}\phi_{j,1}^{(i)}+\sum_{i,j}(u_{j,2}^{(i)})_{tt}\phi_{j,2}^{(i)},z)=(\sum_{i,j}u_{j,1}^{(i)}\mu_{j,1}^{(i)}+\sum_{i,j}u_{j,2}^{(i)}\mu_{j,2}^{(i)}+f,z),
\end{align*}
 Take $z = \psi_k^{(l)}, \xi_k^{(l)}$ and use the properties of the auxiliary spaces, i.e.
% we have
% \begin{align*}
% (\sum_{i,j}(u_{j,1}^{(i)})_{tt}\phi_{j,1}^{(i)}+\sum_{i,j}(u_{j,2}^{(i)})_{tt}\phi_{j,2}^{(i)},\psi_k^{(l)})=(\sum_{i,j}u_{j,1}^{(i)}\mu_{j,1}^{(i)}+\sum_{i,j}u_{j,2}^{(i)}\mu_{j,2}^{(i)}+f,\psi_k^{(l)}),\\
% (\sum_{i,j}(u_{j,1}^{(i)})_{tt}\phi_{j,1}^{(i)}+\sum_{i,j}(u_{j,2}^{(i)})_{tt}\phi_{j,2}^{(i)},\xi_k^{(l)})=(\sum_{i,j}u_{j,1}^{(i)}\mu_{j,1}^{(i)}+\sum_{i,j}u_{j,2}^{(i)}\mu_{j,2}^{(i)}+f,\xi_k^{(l)}).
% \end{align*}
\begin{align*}
(\phi_{j,1}^{(i)},\psi_k^{(l)})=\delta_{il}\delta_{jk},\ (\phi_{j,1}^{(i)},\xi_k^{(l)})=0, (\phi_{j,2}^{(i)},\ \psi_k^{(l)})=0,\ (\phi_{j,1}^{(i)},\xi_k^{(l)})=\delta_{il}\delta_{jk},
\end{align*}
 we get
\begin{align*}
(u_{k,1}^{(l)})_{tt}=(\sum_{i,j}u_{j,1}^{(i)}\mu_{j,1}^{(i)}+\sum_{i,j}u_{j,2}^{(i)}\mu_{j,2}^{(i)}+f,\psi_k^{(l)}),\\
(u_{k,2}^{(l)})_{tt}=(\sum_{i,j}u_{j,1}^{(i)}\mu_{j,1}^{(i)}+\sum_{i,j}u_{j,2}^{(i)}\mu_{j,2}^{(i)}+f,\xi_k^{(l)}).
\end{align*}
Since 
\begin{align*}
(\sum_{i,j}u_{j,1}^{(i)}\mu_{j,1}^{(i)}+\sum_{i,j}u_{j,2}^{(i)}\mu_{j,2}^{(i)},\psi_k^{(l)})=(\sum_{i,j}u_{j,1}^{(i)}\mu_{j,1}^{(i)}+\sum_{i,j}u_{j,2}^{(i)}\mu_{j,2}^{(i)},\phi_{k,1}^{(l)}),\\%=-a(u_H,\phi_{k,1}^{(l)}),
(\sum_{i,j}u_{j,1}^{(i)}\mu_{j,1}^{(i)}+\sum_{i,j}u_{j,2}^{(i)}\mu_{j,2}^{(i)},\xi_k^{(l)})=(\sum_{i,j}u_{j,1}^{(i)}\mu_{j,1}^{(i)}+\sum_{i,j}u_{j,2}^{(i)}\mu_{j,2}^{(i)},\phi_{k,2}^{(l)}),%=-a(u_H,\phi_{k,2}^{(l)})
\end{align*}
one can obtain the following schemes
\begin{align*}
(u_{k,1}^{(l)})_{tt}+\sum_{i,j}u_{j,1}^{(i)}a(\phi_{j,1}^{(i)},\phi_{k,1}^{(l)})+\sum_{i,j}u_{j,2}^{(i)}a(\phi_{j,2}^{(i)},\phi_{k,1}^{(l)})=(f,\psi_k^{(l)}),\ \forall l,k,\\
(u_{k,2}^{(l)})_{tt}+\sum_{i,j}u_{j,1}^{(i)}a(\phi_{j,1}^{(i)},\phi_{k,2}^{(l)})+\sum_{i,j}u_{j,2}^{(i)}a(\phi_{j,2}^{(i)},\phi_{k,2}^{(l)})=(f,\xi_k^{(l)}),\ \forall l,k.
\end{align*}

In this case, one obtains a diagonal mass matrix. Intuitively, we will approximate the original mass matrix $M$ in \eqref{B4} with $\widetilde{M}=(\Pi(\phi_i),\Pi(\phi_j))$, where $\Pi:V_H\rightarrow V_{\text{aux},1}\oplus V_{\text{aux},2}$ is the $L^2$ projection operator. 
We denote $b(u,v)=(\Pi u,\Pi v)$. Then $\forall v_1 \in V_{H,1}, \forall v_2 \in V_{H,2}$,the decoupled splitting scheme corresponding to \eqref{B4} reads
\begin{align}\label{C5}
\begin{aligned}
&b\left(\frac{u_{H,1}^{n+1}-2u_{H,1}^{n}+u_{H,1}^{n-1}}{\Delta \tau^2},v_1\right)+\frac{1}{2}a(u_{H,1}^{n+1}+2u_{H,2}^{n}+u_{H,1}^{n-1},v_1)=b(f^{n+1},v_1),\\
&b\left(\frac{u_{H,2}^{n+1}-2u_{H,2}^{n}+u_{H,2}^{n-1}}{\Delta \tau^2},v_2\right)+a(u_{H,1}^{n}+u_{H,2}^{n},v_2)=b(f^{n+1},v_2).
\end{aligned}
\end{align}

\section{Multirate partially explicit temporal splitting scheme}
\label{section: Multirate partially explicit temporal splitting scheme}
In this section, we will introduce a multirate partially explicit scheme based on the multiscale subspaces constructed in Section \ref{section: Construction of two multiscale subspaces}. We consider the coarse time step size $\Delta T$ and fine time step size $\Delta t$ with $\Delta T=m\Delta t$, and each coarse time interval can be written as $(T_k,T_{k+1}]=\bigcup_{j=n_k}^{n_{k+1}-1}(t_j,t_{j+1}]$, with $n_k=mk$.
% and denote the fine partition and coarse partition of the whole time domain $(0,T]$ as
% \begin{align*}
% 0=t_0<t_1<\cdots<t_{N-1}=T 
% \end{align*}
% and
% \begin{align*}
% 0=T_0<T_1<\cdots<T_{\frac{N-1}{m}}=T.
% \end{align*}
% Therefore we can write each coarse time interval as 
 We will omit the right hand side $f$ later on. The multirate partially explicit temporal splitting scheme is defined as follows. For each coarse interval $(T_k,T_{k+1}]$, we look for $u_{H}^{n_{k+1}}=u_{H,1}^{n_{k+1}}+u_{H,2}^{n_{k+1}}$ with a given $u_{H}^{n_{k}}=u_{H,1}^{n_{k}}+u_{H,2}^{n_{k}}$ from one of the following cases:\\

\textbf{Case 1: coarse-coarse}% Take $\Delta \tau = \Delta T$ in both equations in \eqref{C5}.
\begin{align}\label{D1}
\begin{aligned}
&b\left(\frac{u_{H,1}^{n_{k+1}}-2u_{H,1}^{n_k}+u_{H,1}^{n_{k-1}}}{\Delta T^2},v_1\right)+\frac{1}{2}a(u_{H,1}^{n_{k+1}}+u_{H,1}^{n_{k-1}}+2u_{H,2}^{n_{k}},v_1)=0,\\
&b\left(\frac{u_{H,2}^{n_{k+1}}-2u_{H,2}^{n_k}+u_{H,2}^{n_{k-1}}}{\Delta T^2},v_2\right)+a(u_{H,1}^{n_k}+u_{H,2}^{n_k},v_2)=0,
\end{aligned}
\end{align}
$\forall v_1\in V_{H,1}$ and $\forall v_2\in V_{H,2}$.\\

\textbf{Case 2: coarse-fine} %Take $\Delta \tau = \Delta T$ in the first equation of \eqref{C5}, and $\Delta \tau = \Delta t$ in the second equation of \eqref{C5}.
\begin{align}\label{D2}
\begin{aligned}
&b\left(\frac{u_{H,1}^{n_{k+1}}-2u_{H,1}^{n_k}+u_{H,1}^{n_{k-1}}}{\Delta T^2},v_1\right)+\frac{1}{2}a(u_{H,1}^{n_{k+1}}+u_{H,1}^{n_{k-1}}+2u_{H,2}^{n_{k}},v_1)=0,\\
&b\left(\frac{u_{H,2}^{n+1}-2u_{H,2}^{n}+u_{H,2}^{n-1}}{\Delta t^2},v_2\right)+a(u_{H,1}^{n_k}+u_{H,2}^{n},v_2)=0,
\end{aligned}
\end{align}
$\forall v_1\in V_{H,1}$ and $\forall v_2\in V_{H,2}$, $n=n_k, \cdots, n_k+m-1$.\\

% \textbf{Case 3: fine-coarse}
% \begin{align}\label{D3}
% \begin{aligned}
% &b\left(\frac{u_{H,1}^{n+1}-2u_{H,1}^{n}+u_{H,1}^{n-1}}{\Delta t^2},v_1\right)+\frac{1}{2}a(u_{H,1}^{n+1}+u_{H,1}^{n-1}+2u_{H,2}^{n_{k}},v_1)=b(f^{n+1},v_1),\\
% &b\left(\frac{u_{H,2}^{n_{k+1}}-2u_{H,2}^{n_k}+u_{H,2}^{n_{k-1}}}{\Delta T^2},v_2\right)+a(u_{H,1}^{n_k}+u_{H,2}^{n_k},v_2)=b(f^{n_{k+1}},v_2),
% \end{aligned}
% \end{align}
% $\forall v_1\in V_{H,1}$ and $\forall v_2\in V_{H,2}$.\\

\textbf{Case 3: fine-fine}: Take $\Delta \tau = \Delta t$ in both equations of \eqref{C5}.
% \begin{align}\label{D4}
% \begin{aligned}
% &b\left(\frac{u_{H,1}^{n+1}-2u_{H,1}^{n}+u_{H,1}^{n-1}}{\Delta t^2},v_1\right)+\frac{1}{2}a(u_{H,1}^{n+1}+u_{H,1}^{n-1}+2u_{H,2}^{n},v_1)=0,\\%b(f^{n+1},v_1)
% &b\left(\frac{u_{H,2}^{n+1}-2u_{H,2}^{n}+u_{H,2}^{n-1}}{\Delta t^2},v_2\right)+a(u_{H,1}^{n}+u_{H,2}^{n},v_2)=0,%b(f^{n+1},v_2)
% \end{aligned}
% \end{align}
% $\forall v_1\in V_{H,1}$ and $\forall v_2\in V_{H,2}$.

We remark that the scheme marches with the coarse time step size $\Delta T$ first, and we determine whether $u_{H,1}$ or $u_{H,2}$ needs to be solved using the fine time step size $\Delta t$ in each coarse time instance by an adaptive method introduced in section \ref{section: Adaptive multirate time splitting algorithm}. In the next two subsections, we will discuss the stability of the three cases.

\subsection{Stability of Case 1 or Case 3} \label{section: Stability13}

Let $\gamma_a$ be a constant defined by 
\begin{align}\label{eq:gamma}
\gamma_a:=\sup_{v_1\in V_{H,1}, v_2\in V_{H,2}}\frac{|a(v_1,v_2)|}{\Vert v_1\Vert_a\Vert v_2\Vert_a}<1.
\end{align}
Following the same idea as in \cite{splitting5}, we can get the desired result and we remark that the stability for Case 1 is similar as Case 3.
\begin{lem}\label{l1}
For the syetem \eqref{D1}, we define the discrete energy $E^{n_{k+\frac{1}{2}}}$ as
\begin{align*}
\begin{aligned}
E^{n_{k+\frac{1}{2}}}=&\sum_{i=1}^{2}\Vert u_{H,i}^{n_{k+1}}-u_{H,i}^{n_k}\Vert_b^2+\frac{\Delta T^2}{2}\sum_{i=1}^{2}\left(\Vert u_{H,i}^{n_{k+1}}\Vert_a^2+\Vert u_{H,i}^{n_k}\Vert_a^2 \right)\\
&+\Delta T^2\left(a(u_{H,2}^{n_k},u_{H,1}^{n_{k+1}})+a(u_{H,1}^{n_k},u_{H,2}^{n_{k+1}})\right)-\frac{\Delta T^2}{2}\Vert u_{H,2}^{n_{k+1}}-u_{H,2}^{n_k}\Vert_a^2,
\end{aligned}
\end{align*}
where $\Vert u\Vert_b^2=b(u,u)$, then we have energy conversation $E^{n_{k+\frac{1}{2}}}=E^{n_{k-\frac{1}{2}}}$.
Moreover, the scheme is stable if
\begin{align}\label{D9}
\Vert v\Vert_b^2\geq\frac{\Delta T^2}{2}\Vert v\Vert_a^2,\ \forall v\in V_{H,2},
\end{align}
and we have $\sum_{i=1,2}\left(\Vert u_{H,i}^{n_{k+1}}\Vert_a^2+\Vert u_{H,i}^{n_k}\Vert_a^2\right)\leq\frac{2}{\Delta T^2(1-\gamma_a)}E^{n_{\frac{1}{2}}}.$
\end{lem}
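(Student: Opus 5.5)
The proof has two parts: an energy identity obtained by a central-difference test function, then a coercivity bound on the energy using \eqref{D9} and \eqref{eq:gamma}. For the identity I test the first equation of \eqref{D1} with $v_1=u_{H,1}^{n_{k+1}}-u_{H,1}^{n_{k-1}}$ and the second with $v_2=u_{H,2}^{n_{k+1}}-u_{H,2}^{n_{k-1}}$, multiply by $\Delta T^2$, and add.

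The mass terms are handled by the identity
\begin{align*}
b(x-2y+z,x-z)=\Vert x-y\Vert_b^2-\Vert y-z\Vert_b^2 .
\end{align*}
They produce exactly the difference of $\sum_i\Vert u_{H,i}^{n_{k+1}}-u_{H,i}^{n_k}\Vert_b^2$ between levels $k+\frac12$ and $k-\frac12$.

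For the stiffness terms I abbreviate $x_i=u_{H,i}^{n_{k+1}}$, $y_i=u_{H,i}^{n_k}$, $z_i=u_{H,i}^{n_{k-1}}$ and expand each piece.
\begin{itemize}
\item The implicit term gives $\frac12 a(x_1+z_1,x_1-z_1)=\frac12(\Vert x_1\Vert_a^2-\Vert z_1\Vert_a^2)$.
\item The explicit term $a(y_2,x_2-z_2)$ is rewritten with $\Vert x_2\Vert_a^2-\Vert z_2\Vert_a^2$ and $\Vert x_2-y_2\Vert_a^2-\Vert y_2-z_2\Vert_a^2$. One checks
\begin{align*}
a(y_2,x_2-z_2)=\tfrac12(\Vert x_2\Vert_a^2+\Vert y_2\Vert_a^2)-\tfrac12\Vert x_2-y_2\Vert_a^2-\big[\tfrac12(\Vert y_2\Vert_a^2+\Vert z_2\Vert_a^2)-\tfrac12\Vert y_2-z_2\Vert_a^2\big].
\end{align*}
\item The term $\frac12(\Vert x_1\Vert_a^2-\Vert z_1\Vert_a^2)$ is likewise written as the telescoping difference of $\frac12(\Vert x_1\Vert_a^2+\Vert y_1\Vert_a^2)$ across the two levels.
\item The cross terms are $a(y_2,x_1-z_1)+a(y_1,x_2-z_2)$. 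They equal $[a(y_2,x_1)+a(y_1,x_2)]-[a(z_2,y_1)+a(z_1,y_2)]$, which telescopes into $a(u_{H,2}^{n_k},u_{H,1}^{n_{k+1}})+a(u_{H,1}^{n_k},u_{H,2}^{n_{k+1}})$.
\end{itemize}
Collecting everything gives $E^{n_{k+\frac12}}=E^{n_{k-\frac12}}$. The only real work here is this bookkeeping.

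For stability, the hard part is the lower bound on $E$. By \eqref{D9}, $\Vert x_2-y_2\Vert_b^2-\frac{\Delta T^2}{2}\Vert x_2-y_2\Vert_a^2\ge0$, so that term can be dropped; the $\Vert x_1-y_1\Vert_b^2$ term is nonnegative. This leaves
\begin{align*}
E^{n_{k+\frac12}}\ge\frac{\Delta T^2}{2}\Big(\sum_i(\Vert x_i\Vert_a^2+\Vert y_i\Vert_a^2)+2a(y_2,x_1)+2a(y_1,x_2)\Big).
\end{align*}
By \eqref{eq:gamma} and Young's inequality,
\begin{align*}
2|a(y_2,x_1)|\le\gamma_a(\Vert y_2\Vert_a^2+\Vert x_1\Vert_a^2),\qquad 2|a(y_1,x_2)|\le\gamma_a(\Vert y_1\Vert_a^2+\Vert x_2\Vert_a^2).
\end{align*}
Each squared norm then appears exactly once, so
\begin{align*}
E^{n_{k+\frac12}}\ge\frac{\Delta T^2}{2}(1-\gamma_a)\sum_i(\Vert x_i\Vert_a^2+\Vert y_i\Vert_a^2).
\end{align*}
Conservation gives $E^{n_{k+\frac12}}=E^{n_{\frac12}}$, and rearranging yields the claimed bound.

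The one point to check is that the cross pairing matches each $y_i$ with some $x_j$ ($j\ne i$), so each squared norm is used exactly once. It does.
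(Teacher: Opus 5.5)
Your proof is correct and takes essentially the route the paper intends. The paper defers the argument to \cite{splitting5}, and in its Case 2 analysis it uses the same test functions $u_{H,i}^{n_{k+1}}-u_{H,i}^{n_{k-1}}$; your telescoping identities reproduce $E^{n_{k+\frac12}}$ exactly, and applying \eqref{D9} to $u_{H,2}^{n_{k+1}}-u_{H,2}^{n_k}\in V_{H,2}$, together with \eqref{eq:gamma} and Young's inequality on the two cross terms, yields the stated constant $\frac{2}{\Delta T^2(1-\gamma_a)}$.
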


\subsection{Stability of Case 2} \label{section: Stability2}

\begin{thm}\label{l2}
For the syetem \eqref{D2}, we define the energy as 
\begin{align}
\tilde{E}^{n_{k+\frac{1}{2}}}:=\sum_{i=1,2}\|u_{i}^{n_{k+1}}-u_{i}^{n_k}\|_{b}^{2}+\cfrac{\Delta T^2}{2}(\|u_{1}^{n_k}\|_{a}^{2}+\|u_{1}^{n_{k+1}}\|_{a}^{2})\\
+\Delta T^2(a(u_{1}^{n_{k+1}},u_{1}^{n_k})+a(u_{2}^{n_{k+1}},u_{1}^{n_k})+a(u_{2}^{n_k},u_{1}^{n_{k+1}})),
\end{align}
we have $ \tilde{E}^{n_{k+\frac{1}{2}}} \leq \tilde{E}^{n_{k-\frac{1}{2}}}.$
Similarly, the scheme \eqref{D2} is stable under the condition \eqref{D9}, and we have
\begin{align*}
\sum_{i=1,2}\left(\Vert u_{H,i}^{n_{k+1}}\Vert_a^2+\Vert u_{H,i}^{n_k}\Vert_a^2\right)\leq\frac{2}{\Delta T^2(1-\gamma_a)}E^{n_{\frac{1}{2}}}.
\end{align*}
\end{thm}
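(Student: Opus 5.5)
The plan is to test each equation of \eqref{D2} against a symmetric difference of its own unknown, as in Lemma \ref{l1}, treating the fast ($u_1$) and slow ($u_2$) parts separately and then summing.

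\textbf{Implicit equation.} In the first equation of \eqref{D2} I take $v_1=u_1^{n_{k+1}}-u_1^{n_{k-1}}$ and multiply by $\Delta T^2$. The mass term telescopes, since $b(w^{k+1}-2w^k+w^{k-1},w^{k+1}-w^{k-1})=\|w^{k+1}-w^k\|_b^2-\|w^k-w^{k-1}\|_b^2$. The term $\tfrac12 a(u_1^{n_{k+1}}+u_1^{n_{k-1}},u_1^{n_{k+1}}-u_1^{n_{k-1}})$ gives $\tfrac12(\|u_1^{n_{k+1}}\|_a^2-\|u_1^{n_{k-1}}\|_a^2)$. The coupling term is $a(u_2^{n_k},u_1^{n_{k+1}}-u_1^{n_{k-1}})$.

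\textbf{Explicit fine-step equation.} For the second equation I sum the $m$ fine steps $n=n_k,\dots,n_k+m-1$. The forcing $a(u_1^{n_k},\cdot)$ is frozen over the coarse interval, so the sum is a standard leapfrog for $b(\ddot u_2)+a(u_2)=-a(u_1^{n_k})$ on the fine grid. Testing with $u_2^{n+1}-u_2^{n-1}$ and summing gives a fine discrete energy
\[
\|u_2^{n+1}-u_2^n\|_b^2+\Delta t^2 a(u_2^{n+1},u_2^n)+2\Delta t^2 a(u_1^{n_k},u_2^{n+1}+u_2^n)/2
\]
that is exactly conserved inside the interval, because $u_1^{n_k}$ is constant there.

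\textbf{Passing from fine to coarse quantities.} This is the main obstacle. The fine-level conserved quantity involves $\Delta t$ differences, while $\tilde E$ involves coarse differences $u_2^{n_{k+1}}-u_2^{n_k}$ scaled by $\Delta T^2$, and the coarse $u_2$-stiffness terms are absent from $\tilde E$. My approach is to regard the $m$ substeps as an operator map $(u_2^{n_k},u_2^{n_k-1})\mapsto(u_2^{n_{k+1}},\dots)$. I would express the net effect through the recursion in the $b$-orthonormal eigenbasis of $a$ on $V_{H,2}$, where each mode is a Chebyshev-type recursion. I then aim to show that, under \eqref{D9} (which forces $\Delta t^2\lambda\le\Delta T^2\lambda\le 2$), the coarse increment satisfies
\begin{align*}
\|u_2^{n_{k+1}}-u_2^{n_k}\|_b^2-\|u_2^{n_k}-u_2^{n_{k-1}}\|_b^2\le -\Delta T^2 a(u_1^{n_k},u_2^{n_{k+1}}-u_2^{n_{k-1}}),
\end{align*}
up to the $a(u_2,u_2)$ terms that are dissipated. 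That one-sided inequality is what yields $\le$ rather than equality. If the modal argument becomes unwieldy, I would first try to establish it in the case $m=1$, where it reduces to Lemma \ref{l1}, and then argue monotonically in $m$.

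\textbf{Assembly.} Adding this inequality to the identity from the implicit equation, the cross terms combine via
\[
a(u_2^{n_k},u_1^{n_{k+1}}-u_1^{n_{k-1}})+a(u_1^{n_k},u_2^{n_{k+1}}-u_2^{n_{k-1}})
=[a(u_2^{n_k},u_1^{n_{k+1}})+a(u_1^{n_k},u_2^{n_{k+1}})]-[\text{same at }k-1],
\]
which telescopes into the coupling terms of $\tilde E$. This gives $\tilde E^{n_{k+1/2}}\le\tilde E^{n_{k-1/2}}$.

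\textbf{Bound on the energy norms.} For the final estimate I bound the cross terms by $\gamma_a$ from \eqref{eq:gamma} together with Cauchy--Schwarz. I then use \eqref{D9} to absorb the negative $u_2$-increment contributions into $\|u_2^{n_{k+1}}-u_2^{n_k}\|_b^2$, exactly as in Lemma \ref{l1}. This yields $\tilde E\ge\frac{\Delta T^2(1-\gamma_a)}{2}\sum_i(\|u_i^{n_{k+1}}\|_a^2+\|u_i^{n_k}\|_a^2)$. Chaining with the monotonicity down to $k=0$ gives the stated bound in terms of the initial energy.
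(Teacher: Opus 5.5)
You handle the implicit equation and the telescoping of the cross terms correctly; this matches the paper's \eqref{case2-1_energy2}. The step you flag as the main obstacle, however, is only announced, and the one-sided inequality you aim for fails under either reading. Read literally, with the $a(u_2,u_2)$ terms dropped as ``dissipated'', it is false. The explicit update is a non-dissipative leapfrog, so these terms must telescope into the energy rather than disappear. Take $V_{H,1}$ and $V_{H,2}$ $a$-orthogonal (allowed by \eqref{eq:gamma}) and $u_1\equiv 0$. Then your right-hand side vanishes, while for a single mode $\|u_2^{n_{k+1}}-u_2^{n_k}\|_b^2$ oscillates in $k$, so the left-hand side is positive for some $k$. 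The missing $u_2$-stiffness term in the displayed $\tilde E$ is evidently a misprint: $a(u_1^{n_{k+1}},u_1^{n_k})$ should read $a(u_2^{n_{k+1}},u_2^{n_k})$, which is what the paper's computation actually produces. Your last step needs this term anyway. Absorbing the $u_2$ increment via \eqref{D9} as in Lemma \ref{l1} rests on $\Delta T^2 a(u_2^{n_{k+1}},u_2^{n_k})=\frac{\Delta T^2}{2}(\|u_2^{n_{k+1}}\|_a^2+\|u_2^{n_k}\|_a^2)-\frac{\Delta T^2}{2}\|u_2^{n_{k+1}}-u_2^{n_k}\|_a^2$, so the proposal contradicts its own earlier claim.

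With that term kept, an exact inequality is still not available. This is what the modal computation reveals, which you mention but do not carry out and which is the paper's route. Set $\cos 2\theta_i=1-\Delta t^2\lambda_{2,i}/2$ and $M=\Delta T/\Delta t$. For an unforced mode the fine recursion gives $u^{n+M}+u^{n-M}=2\cos(2M\theta_i)u^n$. The coarse samples therefore obey a leapfrog with effective coefficient $\eta_i=4\sin^2(M\theta_i)$ instead of $\Delta T^2\lambda_{2,i}=4M^2\sin^2\theta_i$. The mismatch $\frac43M^2(M^2-1)\theta_i^4+\dots$ is strictly positive for $M\ge 2$, so $\|u_2^{n_{k+1}}-u_2^{n_k}\|_b^2+\Delta T^2a(u_2^{n_{k+1}},u_2^{n_k})$ oscillates at that order.

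Your exactly conserved fine energy also does not survive passage through coarse times. The start-up value $u_2^{n_k-1}$ comes from one of two places. If it comes from the previous interval, the frozen forcing there was $u_1^{n_{k-1}}$, so the fine energy jumps by $\Delta t^2a(u_1^{n_k}-u_1^{n_{k-1}},u_2^{n_k}+u_2^{n_k-1})$. Otherwise it comes from the interpolation formula \eqref{eq:case2-1_diff3}.

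The paper handles this by summing the $M$ substeps explicitly with the rotations $R_{2,i}$, treating the two start-up cases separately. It then compares the resulting coarse coefficients ($\beta_i,\xi_i$, resp.\ $\eta_i$) with $\Delta T^2\lambda_{2,i}$. It obtains the energy relation only up to remainders $C\max_i M^2\theta_i^2$ resp.\ $C\max_i M^4\theta_i^4$ times quadratic quantities. Your fallback of proving $m=1$ and ``arguing monotonically in $m$'' gives no mechanism for controlling such remainders, so the central step of the proof is missing.
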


%\begin{proof}
\textit{Proof}: For the system \eqref{D2}, in a matrix form, we write
\begin{align} \label{eq:case2-1-u2}
\begin{aligned}
    \Delta_{tt} u_1^{n_k} + A_{11}(\frac{u_1^{n_{k+1}}+u_1^{n_{k-1}}}{2}) + A_{12} u_2^{n_k} &= 0,\\
    \Delta_{tt} u_2^n + A_{21} u_1^{n_k} + A_{22}u_2^n &= 0,
    \end{aligned}
\end{align}
where $n=n_k+1, \cdots, n_{k+1}$.

Suppose $A_{22}\phi_{2,i}=\lambda_{2,i}\phi_{2,i}$, then we can write
\begin{equation*}
    u_{2}^{n}=\sum_{i}u_{2,i}^{n}\phi_{2,i}, \;\; \Pi_{2,1}u_{1}^{n_k}=\sum\tilde{u}_{1,i}^{n_k}\phi_{2,i}
\end{equation*}

The temporal discretization of the second equation in \eqref{eq:case2-1-u2} leads to
\begin{equation}\label{eq:case2-1-u2_2}
  u_{2,i}^{n+1}-2u_{2,i}^{n}+u_{2,i}^{n-1}+\tilde{\lambda}_{2,i}\bar{u}^n_{1,i}+\tilde{\lambda}_{2,i}u_{2,i}^{n}=0  
\end{equation}
where $\tilde{\lambda}_{2,i}=\Delta t^{2}\lambda_{2,i}$, and $\bar{u}^{n+m}_{1,i} = \tilde{u}^{n_k}_{1,i}+ \cfrac{m}{M}(\tilde{u}^{n_{k+1}}_{1,i}-\tilde{u}^{n_k}_{1,i})$.

Equation \eqref{eq:case2-1-u2_2} can be further written as
\begin{equation*}
    \left(\begin{array}{c}
u_{2,i}^{n+1}\\
u_{2,i}^{n}
\end{array}\right)=\left(\begin{array}{cc}
2-\tilde{\lambda}_{2,i} & -1\\
1 & 0
\end{array}\right)\left(\begin{array}{c}
u_{2,i}^{n}\\
u_{2,i}^{n-1}
\end{array}\right) -\left(\begin{array}{c}
\tilde{\lambda}_{2,i}  \bar{u}^n_{1,i}\\
0
\end{array}\right)
\end{equation*}

Let $C=\left(\begin{array}{cc}
2-\tilde{\lambda}_{2,i} & -1\\
1 & 0
\end{array}\right)$, 
it has two eigenvalues $a+b\textbf{i}$ and $a-b\textbf{i}$, where $a=({2-\tilde{\lambda}_{2,i}})/2$ and $b=\sqrt{4-(2-\tilde{\lambda}_{2,i})^2}/\sqrt{2}$. Write $a=\cos(2\theta_i), b=\sin(2\theta_i)$ where $\theta_{i}=\cfrac{1}{2}\arctan(\sqrt{4-(2-\tilde{\lambda}_{2,i})^2}/(2-\tilde{\lambda}_{2,i}))$, and let
\begin{equation}
    R_{2,i}=\left(\begin{array}{cc}
a & -b\\
b & a
\end{array}\right), \;\; P_{2,i}=\cfrac{1}{2}\left(\begin{array}{cc}
\frac{1}{ \cos(\theta_i)} & \frac{1}{ \sin(\theta_i)}\\
-\frac{1}{ \cos(\theta_i)} &\frac{1}{ \sin(\theta_i)}
\end{array}\right).
\end{equation}
% then \begin{equation*}
%     P_{2,i}^{-1}=\left(\begin{array}{cc}
% \cos(\theta_i) & -\cos(\theta_i)\\
% \sin(\theta_i) & \sin(\theta_i)
% \end{array}\right).
% \end{equation*}
Let $E_{2,i}= P_{2,i}^{-1}$, we have $P_{2,i}^{-1} C P_{2,i}=R_{2,i}$, or equivalently, $C=E_{2,i}^{-1}R_{2,i}E_{2,i}$. That is to say,
\begin{equation*}
    \left(\begin{array}{c}
u_{2,i}^{n+1}\\
u_{2,i}^{n}
\end{array}\right)=E_{2,i}^{-1}R_{2,i} E_{2,i}\left(\begin{array}{c}
u_{2,i}^{n}\\
u_{2,i}^{n-1}
\end{array}\right) -\left(\begin{array}{c}
\tilde{\lambda}_{2,i} \bar{u}^n_{1,i}\\
0
\end{array}\right).
\end{equation*}
% which implies
% \begin{equation*}
% E_{2,i}\left(\begin{array}{c}
% u_{2,i}^{n+1}\\
% u_{2,i}^{n}
% \end{array}\right)=R_{2,i}E_{2,i}\left(\begin{array}{c}
% u_{2,i}^{n}\\
% u_{2,i}^{n-1}
% \end{array}\right)  - \tilde{\lambda}_{2,i}  \bar{u}_{1,i}^{n} \left(\begin{array}{c}
% \cos\theta_i \\
% \sin\theta_i 
% \end{array}\right)
% \end{equation*}
% %$\bar{u}^{n+m}_{1,i} = \tilde{u}^{n_k}_{1,i}+ \cfrac{m}{M}(\tilde{u}^{n_{k+1}}_{1,i}-\tilde{u}^{n_k}_{1,i})$
Marching forward $m$ steps, we get
\begin{align*}
&E_{2,i}\left(\begin{array}{c}
u_{2,i}^{n+m}\\
u_{2,i}^{n+m-1}
\end{array}\right)\\
&=R_{2,i}^{m}E_{2,i}\left(\begin{array}{c}
u_{2,i}^{n}\\
u_{2,i}^{n-1}
\end{array}\right)-\tilde{\lambda}_{2,i}\sum_{j=0}^{m-1}R_{2,i}^{j} (\tilde{u}^{n_k}_{1,i}+ \cfrac{j}{M}(\tilde{u}^{n_{k+1}}_{1,i}-\tilde{u}^{n_k}_{1,i})) \left(\begin{array}{c}
\cos\theta_{i}\\
\sin\theta_{i}
\end{array}\right)\\
 & =R_{2,i}^{m}E_{2,i}\left(\begin{array}{c}
u_{2,i}^{p}\\
u_{2,i}^{p-1}
\end{array}\right)-\tilde{\lambda}_{2,i}\tilde{u}_{1,i}^{n_k}(I-R_{2,i})^{-1}(I-R_{2,i}^{m})\left(\begin{array}{c}
\cos\theta_{i}\\
\sin\theta_{i}
\end{array}\right)\\
&- \cfrac{\tilde{\lambda}_{2,i}}{M}(\tilde{u}^{n_{k+1}}_{1,i}-\tilde{u}^{n_k}_{1,i})(I-R_{2,i})^{-2}(R_{2,i}-mR_{2,i}^{m}+(m-1)R_{2,i}^{m+1})\left(\begin{array}{c}
\cos\theta_{i}\\
\sin\theta_{i}
\end{array}\right)
\end{align*}

One can check that
\begin{align*}
 & E_{2,i}^{-1} R_{2,i}^{m}E_{2,i} = \cfrac{1}{\sin(2\theta_i )}\left(\begin{array}{cc}
\sin2(m+1)\theta_{i} & -\sin2m\theta_{i}\\
 \sin2m\theta_{i} & -\sin2(m-1)\theta_{i}
\end{array}\right),
\end{align*}
\begin{align*}
  & E^{-1}_{2,i} (I-R_{2,i})^{-1}(I-R_{2,i}^{m})\left(\begin{array}{c}
\cos\theta_{i}\\
\sin\theta_{i}
\end{array}\right)\\
&=\cfrac{1}{2\sin2\theta_{i}(1-\cos2\theta_{i})}
\left(\begin{array}{c}
\sin 2\theta_i -  \sin (2(m+1)\theta_{i})+\sin (2m\theta_{i})\\
\sin 2\theta_i + \sin (2(m-1)\theta_{i})-\sin (2m\theta_{i})
\end{array}\right),
\end{align*}
\begin{align*}
 &E^{-1}_{2,i} (I-R_{2,i})^{-2}(R_{2,i}-mR_{2,i}^{m}+(m-1)R_{2,i}^{m+1})\left(\begin{array}{c}
\cos\theta_{i}\\
\sin\theta_{i}
\end{array}\right) \\
& =\cfrac{1}{\sin2\theta_{i}\tilde{\lambda}_{2,i}}
\left(\begin{array}{c}
-(m-1)\sin(2(m + 1)\theta_i ) -\sin(2\theta_i ) + m\sin(2m\theta_i ) \\
-(m-1)\sin(2m\theta_i)  + m\sin(2(m - 1)\theta_i)
\end{array}\right),
\end{align*}
where we use the fact that $\cfrac{1}{2(1-\cos2\theta_{i})}=\cfrac{1}{\tilde{\lambda}_{2,i}}$. We then have 
\begin{align*}
&\left(\begin{array}{c}
u_{2,i}^{n+m}\\
u_{2,i}^{n+m-1}
\end{array}\right)
=  \cfrac{1}{\sin2\theta_{i}}\left(\begin{array}{cc}
\sin(2(m+1)\theta_{i}) & -\sin(2m\theta_{i})\\
\sin(2m\theta_{i}) & -\sin(2(m-1)\theta_{i})
\end{array}\right)\left(\begin{array}{c}
u_{2,i}^{n}\\
u_{2,i}^{n-1}
\end{array}\right)\\
&-\cfrac{1}{\sin2\theta_{i}}\left(\begin{array}{c}
\sin 2\theta_i -  \sin (2(m+1)\theta_{i})+\sin (2m\theta_{i})\\
\sin 2\theta_i + \sin (2(m-1)\theta_{i})-\sin (2m\theta_{i})
\end{array}\right)\tilde{u}_{1,i}^{n_k}-\cfrac{1}{M\sin2\theta_{i}}\\
&\left(\begin{array}{c}
-(m-1)\sin(2(m + 1)\theta_i ) -\sin(2\theta_i ) + m\sin(2m\theta_i ) \\
-(m-1)\sin(2m\theta_i)  + m\sin(2(m - 1)\theta_i)
\end{array}\right)(\tilde{u}^{n_{k+1}}_{1,i}-\tilde{u}^{n_k}_{1,i}).
\end{align*}

With a simplified notation, we can write
\begin{align}
u_{2,i}^{n+m}-u_{2,i}^{n+m-1}= 
% & \cfrac{\sin(2(m+1)\theta_{i})-\sin(2m\theta_{i})}{\sin2\theta_{i}}u_{2,i}^{n}-\cfrac{\sin(2m\theta_{i})-\sin(2(m-1)\theta_{i})}{\sin2\theta_{i}}u_{2,i}^{n-1}\\
%  & -\cfrac{2\sin(2m\theta_{i})-\sin(2(m+1)\theta_{i})-\sin(2(m-1)\theta_{i})}{\sin 2\theta_{i}}\tilde{u}_{1,i}^{n_k}\\
%  &-\cfrac{-(m-1)\sin(2(m+1)\theta_{i})+(2m-1)\sin(2m\theta_{i})-m\sin(2(m-1)\theta_{i})-\sin 2\theta_{i}}{\sin 2\theta_{i}}(\tilde{u}^{n_{k+1}}_{1,i}-\tilde{u}^{n_k}_{1,i}) \\
& \alpha_{i,m+1}u_{2,i}^{n} -\alpha_{i,m}u_{2,i}^{n-1}+\gamma_{i,m} \tilde{u}_{1,i}^{n_k} -\mu_{i,m} (\tilde{u}^{n_{k+1}}_{1,i}-\tilde{u}^{n_k}_{1,i}) \label{eq:case2_1-bsum}
\end{align}
where
\begin{align*}
&\alpha_{i,m+1}  :=\cfrac{\sin(2(m+1)\theta_{i})-\sin(2m\theta_{i})}{\sin2\theta_{i}}, \\
&\gamma_{i,m}  :=\cfrac{\sin(2(m+1)\theta_{i})-2\sin(2m\theta_{i})+\sin(2(m-1)\theta_{i})}{\sin2\theta_{i}}=\alpha_{i,m+1}-\alpha_{i,m},\\
&\mu_{i,m} := \cfrac{(2m-1)\sin2m\theta_{i}-(m-1)\sin2(m+1)\theta_{i}-m\sin2(m-1)\theta_{i}}{M\sin 2\theta_{i}}-\cfrac{1}{M}.
\end{align*}
 % - \cfrac{(M+1)\sin(2M\theta_i) - (M+1)\sin(2\theta_i)-(M-1)\sin(2(M+1)\theta_i)}{M\sin(2\theta_i)}(\tilde{u}^{n_{k+1}}_{1,i}-\tilde{u}^{n_k}_{1,i}) \\

For \eqref{eq:case2_1-bsum}, we sum over $m=1,\cdots, M$ both sides of the equation and take $n=n_k$, this yields $u_{2,i}^{n_{k+1}}-u_{2,i}^{n_k}=$
\begin{equation} \label{eq:case2-1_sum}
  \cfrac{\sin(2M\theta_{i})}{\sin2\theta_{i}} (u_{2,i}^{n_k}-\tilde{u}_{2,i}^{n_k-1})+(\alpha_{i,M+1}-1)(u_{1,i}^{n_k} +u_{2,i}^{n_k})- \sum_{j=1}^{M}\mu_{i,j}(\tilde{u}^{n_{k+1}}_{1,i}-\tilde{u}^{n_k}_{1,i})
\end{equation}
where we use the fact that $\alpha_{i,1}=1$, $(\sum_{j=1}^{M}\alpha_{i,j})= \cfrac{\sin(2M\theta_{i})}{\sin2\theta_{i}}$, and
\begin{align*}
 \sum_{j=1}^{M}\mu_{i,j}&= \cfrac{(M+1)\sin(2M\theta_i) - (M+1)\sin(2\theta_i)-(M-1)\sin(2(M+1)\theta_i)}{M\sin(2\theta_i)}.
\end{align*}
%\end{proof}

 We note that $\tilde{u}_{2,i}^{n_k-1}$ in \eqref{eq:case2-1_sum} may be the solution computed in the previous coarse time interval using ``fine-fine" scheme, or from the values interpolated from the solution using ``coarse-fine" or ``coarse-coarse" scheme. We will discuss these two cases below.

\subsubsection[The values of u-tilde obtained by interpolation]{The values of $\tilde{u}_{2,i}^{n_k-1}$ obtained by interpolation}\label{sec:c-f-interp}

Suppose we do not have the value of $u_{2,i}^{n_k-1}$, from the scheme \eqref{eq:case2-1-u2_2}, we know
\begin{equation}\label{eq:case2-1_diff0}
    u_{2,i}^{n_k+1}-u_{2,i}^{n_k}=u_{2,i}^{n_k}-\tilde{u}_{2,i}^{n_k-1}-\tilde{\lambda}_{2,i}u_{2,i}^{n_k}-\tilde{\lambda}_{2,i} \tilde{u}_{1,i}^{n_k}.
\end{equation}
We would like to find the value $\tilde{u}_{2,i}^{n_k-1}$ using $u_{2,i}^{n_k}, u_{2,i}^{n_k+1}$ and $u_{2,i}^{n_{k-1}}$ such that 
\begin{equation}\label{eq:case2-1_diff}
    \cfrac{u_{2}^{n_k+1}-2u_{2}^{n_k}+\tilde{u}_{2}^{n_k-1}}{\Delta t^{2}}\approx\partial_{tt}u_2^{n_k}
\end{equation}

By Taylor expansion, we have
\begin{align*}
   u_{2,i}^{n_k+1} &=  u_{2,i}^{n_k} + \Delta t \partial_t u_{2,i}^{n_k} + \cfrac{\Delta t^2}{2} \partial_{tt} u_{2,i}^{n_k} +  \cfrac{\Delta t^3}{6} \partial_{ttt} u_{2,i}^{n_k} + {O}(\Delta t^4),\\
    u_{2,i}^{n_{k-1}} &=  u_{2,i}^{n_k} - M\Delta t \partial_t u_{2,i}^{n_k} + \cfrac{(M\Delta t)^2}{2} \partial_{tt} u_{2,i}^{n_k} - \cfrac{(M\Delta t)^3}{6} \partial_{ttt} u_{2,i}^{n_k} + {O}(\Delta t^4).
\end{align*}
Thus, 
\begin{align*}
 &u_{2,i}^{n_k+1}-2u_{2,i}^{n_k}+(u_{2,i}^{n_k}-M^{-1}(u_{2,i}^{n_k}-u_{2,i}^{n_k-1})) \\
 =&\cfrac{(1+M)\Delta t^{2}}{2}\partial_{tt}u_{2,i}^{n_k}+\cfrac{\Delta t^{3}(1-M^2)}{6}\partial_{ttt}u_{2,i}^{n_k}+O(\Delta t^{4})
\end{align*}
Therefore, we have 
\begin{equation}\label{eq:case2-1_diff2}
\partial_{tt}u_{2,i}^{n_k}\approx\cfrac{2(u_{2,i}^{n_k+1}-2u_{2,i}^{n_k}+(u_{2,i}^{n_k}-M^{-1}(u_{2,i}^{n_k}-u_{2,i}^{n_k-1})) )}{\Delta t^{2}(M+1)}
\end{equation}

From equations \eqref{eq:case2-1_diff} and \eqref{eq:case2-1_diff2}, we have 
\begin{equation*}
       \cfrac{u_{2,i}^{n_k+1}-2u_{2,i}^{n_k}+\tilde{u}_{2,i}^{n_k-1}}{\Delta t^{2}} = \cfrac{2(u_{2,i}^{n_k+1}-2u_{2,i}^{n_k}+(u_{2,i}^{n_k}-M^{-1}(u_{2,i}^{n_k}-u_{2,i}^{n_k-1})) )}{\Delta t^{2}(M+1)}, 
\end{equation*}
then
\begin{equation*}
    \tilde{u}_{2,i}^{n_k-1}=u_{2,i}^{n_k}-\cfrac{(M-1)(u_{2,i}^{n_k+1}-u_{2,i}^{n_k})}{M+1}-\cfrac{2(u_{2,i}^{n_k}-u_{2,i}^{n_{k-1}})}{M(M+1)}.
\end{equation*}

With the help \eqref{eq:case2-1_diff0}, we can eliminate $u_{2,i}^{n_k+1}$ in the above equation and solve for $\tilde{u}_{2,i}^{n_k-1}$, that is
\begin{equation}\label{eq:case2-1_diff3}
    \tilde{u}_{2,i}^{n_k-1}=u_{2,i}^{n_k}+\cfrac{\tilde{\lambda}_{2,i}(M-1)(u_{2,i}^{n_k}+ \tilde{u}_{1,i}^{n_k})}{2}-\cfrac{(u_{2,i}^{n_k}-u_{2,i}^{n_{k-1}})}{M}.
\end{equation}
We note that \eqref{eq:case2-1_diff3} is the interpolation formula we use in the numerical tests.

Plug \eqref{eq:case2-1_diff3} into \eqref{eq:case2-1_sum}, we get
\begin{align*}
u_{2,i}^{n_{k+1}}-u_{2,i}^{n_k}&=\beta_{i}(u_{2,i}^{n_k}-u_{2,i}^{n_{k-1}})-\xi_{i}(u_{2,i}^{n_k}+\tilde{u}_{1,i}^{n_k})- \sum_{j=1}^{M}\mu_{i,j}(\tilde{u}^{n_{k+1}}_{1,i}-\tilde{u}^{n_k}_{1,i}),
\end{align*}
where $\beta_{i}=\cfrac{\sin(2M\theta_{i})}{M\sin2\theta_{i}}$ and
\begin{align*}
\xi_{i}&=\cfrac{2(M-1)\sin^2\theta_{i}\sin(2M\theta_{i})}{\sin2\theta_{i}}-\cfrac{\sin(2(M+1)\theta_{i})-\sin(2M\theta_{i})}{\sin2\theta_{i}}+1.
\end{align*}
Thus,
\begin{align}
    (\cfrac{1+\beta_{i}}{2})(u_{2,i}^{n_{k+1}}-u_{2,i}^{n_k})=&(\cfrac{1+\beta_{i}}{2})(u_{2,i}^{n_k}-u_{2,i}^{n_{k-1}})-\cfrac{1-\beta_{i}}{2}(u_{2,i}^{n_{k+1}}-u_{2,i}^{n_{k-1}})\notag \\
    &-\xi_{i}(u_{2,i}^{n_k}+\tilde{u}_{1,i}^{n_k}) - \sum_{j=1}^{M}\mu_{i,j}(u_{1,i}^{n_{k+1}}-u_{1,i}^{n_k})\label{eq:case2-1_ite}.
\end{align}
Multiply both sides of equation \eqref{eq:case2-1_ite} by $\cfrac{\tilde{\lambda}_{2,i}M^2}{ \xi_{i}}(u_{2,i}^{n_{k+1}}-u_{2,i}^{n_{k-1}})$, we have 
\begin{align*}
    & (\cfrac{1+\beta_{i}}{2})\cfrac{\tilde{\lambda}_{2,i}M^2}{\xi_{i}}|u_{2,i}^{n_{k+1}}-u_{2,i}^{n_{k}} |^2=(\cfrac{1+\beta_{i}}{2})\cfrac{\tilde{\lambda}_{2,i}M^2}{ \xi_{i}}|u_{2,i}^{n_k}-u_{2,i}^{n_{k-1}} |^2 \\
    &-\cfrac{1-\beta_{i}}{2}\cfrac{\tilde{\lambda}_{2,i}M^2}{ \xi_{i}}|u_{2,i}^{n_{k+1}}-u_{2,i}^{n_{k-1}}|^2-\tilde{\lambda}_{2,i}M^2 (u_{2,i}^{n_k}+\tilde{u}_{1,i}^{n_k})(u_{2,i}^{n_{k+1}}-u_{2,i}^{n_{k-1}})\\
    &-\sum_{j=1}^{M}\mu_{i,j}\cfrac{\tilde{\lambda}_{2,i}M^2}{ \xi_{i}} (u_{1,i}^{n_{k+1}}-u_{1,i}^{n_k})(u_{2,i}^{n_{k+1}}-u_{2,i}^{n_{k-1}}),
\end{align*}
where we have $\cfrac{1-\beta_{i}}{2}\cfrac{\lambda_{2,i}}{ \xi_{i}}|u_{2,i}^{n_{k+1}}-u_{2,i}^{n_{k-1}}|^2\geq 0$, since $1-\beta_{i} = \cfrac{2}{3}(M^2-1)\theta_i^2 + O(\theta_i^4)$ by Taylor's expansion.

Sum over $i$ and re-organize terms, use $\tilde{\lambda}_{2,i}=\Delta t^2 \lambda_{2,i}$, we obtain
\begin{align}
    &\sum_i (\cfrac{1+\beta_{i}}{2})\cfrac{\tilde{\lambda}_{2,i}M^2}{\xi_{i}}|u_{2,i}^{n_{k+1}}-u_{2,i}^{n_{k}} |^2 + \Delta T^2 a(u_{2}^{n_{k+1}},u_{2}^{n_k}+ {u}_{1}^{n_k}) \label{case2-1_energy1}\\
    \leq &\sum_i (\cfrac{1+\beta_{i}}{2})\cfrac{\tilde{\lambda}_{2,i}M^2}{\xi_{i}}|u_{2,i}^{n_{k}}-u_{2,i}^{n_{k-1}} |^2
   +\Delta T^2a(u_{2}^{n_{k-1}},u_{2}^{n_k}+ {u}_{1}^{n_k})\notag\\
   &-\sum_{j=1}^{M}\mu_{i,j}\cfrac{\tilde{\lambda}_{2,i}M^2}{ \xi_{i}} (u_{1,i}^{n_{k+1}}-u_{1,i}^{n_k})(u_{2,i}^{n_{k+1}}-u_{2,i}^{n_{k-1}}).\notag
\end{align}

For the first equation in \eqref{eq:case2-1-u2}, we multiply $u_1^{n_{k+1}}-u_1^{n_{k-1}}$ both sides of the equation, we have
\begin{align}
   & \| u_{1}^{n_{k+1}}-u_{1}^{n_k}\|_b^2 + \cfrac{\Delta T^2}{2}  \left(\Vert u_{1}^{n_{k+1}}\Vert_a^2+\Vert u_{1}^{n_{k}}\Vert_a^2\right)+ \Delta T^2 
a(u_{2}^{n_k},u_{1}^{n_{k+1}}) \label{case2-1_energy2}\\
=&\| u_{1}^{n_k}-u_{1}^{n_{k-1}}\|_b^2 +\cfrac{\Delta T^2}{2}\left(\Vert u_{1}^{n_{k}}\Vert_a^2+\Vert u_{1}^{n_{k-1}}\Vert_a^2\right)+ \Delta T^2 a(u_{2}^{n_k},u_{1}^{n_{k-1}})\notag
\end{align}
Sum \eqref{case2-1_energy1} and \eqref{case2-1_energy2}, we get 
\begin{align*}\label{case2-1_energy3}
    & \| u_{1}^{n_{k+1}}-u_{1}^{n_k}\|_b^2 +\sum_i (\cfrac{1+\beta_{i}}{2})\cfrac{\tilde{\lambda}_{2,i}M^2}{\xi_{i}}|u_{2,i}^{n_{k+1}}-u_{2,i}^{n_{k}} |^2  +\cfrac{\Delta T^2}{2} \Vert u_{1}^{n_{k+1}}\Vert_a^2\\
    &+\cfrac{\Delta T^2}{2}  \Vert u_{1}^{n_{k}}\Vert_a^2+\Delta T^2 \left( a(u_{2}^{n_{k+1}},{u}_{1}^{n_k})+ a(u_{2}^{n_{k+1}},u_{2}^{n_k}) +a(u_{1}^{n_{k+1}}, u_{2}^{n_k})  \right)\\
    \leq &\| u_{1}^{n_k}-u_{1}^{n_{k-1}}\|_b^2+\sum_i (\cfrac{1+\beta_{i}}{2})\cfrac{\tilde{\lambda}_{2,i}M^2}{\xi_{i}}|u_{2,i}^{n_{k}}-u_{2,i}^{n_{k-1}} |^2  +\cfrac{\Delta T^2}{2}\Vert u_{1}^{n_{k}}\Vert_a^2\\
    &+\cfrac{\Delta T^2}{2}\Vert u_{1}^{n_{k-1}}\Vert_a^2+\Delta T^2 \left(a(u_{2}^{n_k},u_{1}^{n_{k-1}})+ a(u_{2}^{n_k},u_{2}^{n_{k-1}})+ a({u}_{1}^{n_k},u_{2}^{n_{k-1}}) \right)\\
    &-\sum_{j=1}^{M}\mu_{i,j}\cfrac{\tilde{\lambda}_{2,i}M^2}{ \xi_{i}} (u_{1,i}^{n_{k+1}}-u_{1,i}^{n_k})(u_{2,i}^{n_{k+1}}-u_{2,i}^{n_{k-1}}).
\end{align*}

 Since $\xi_{i}=1-\cos(2M\theta_{i})+\cfrac{M\sin\theta_{i}\sin(2M\theta_{i})}{\cos\theta_{i}}$, then
\begin{align*}
\cfrac{1+\beta_{i}}{2}\cfrac{M^{2}\tilde{\lambda}_{i}}{\xi_{i}}=1+O(M^2\theta_{i}^{2}), \;\; 
\cfrac{\sum_{j=1}^{M}\mu_{i,j}}{ \xi_{i}}  = \cfrac{M^2-1}{6M^2} + O(\theta_i^2).
%\sum_{j=1}^{M}\mu_{i,j}\cfrac{\tilde{\lambda}_{2,i}M^2}{ \xi_{i}}  = \cfrac{2}{3}(M^2-1)\theta_i^2 + O(\theta_i^4).
\end{align*}
Thus, we have
\begin{align*}
    & \| u_{1}^{n_{k+1}}-u_{1}^{n_k}\|_b^2 + \| u_{2}^{n_{k+1}}-u_{2}^{n_k}\|_b^2 +\cfrac{\Delta T^2}{2}  \left(\Vert u_{1}^{n_{k+1}}\Vert_a^2+\Vert u_{1}^{n_{k}}\Vert_a^2\right)\\
    &+\Delta T^2 \left( a(u_{2}^{n_{k+1}},{u}_{1}^{n_k})+ a(u_{2}^{n_{k+1}},u_{2}^{n_k}) +a(u_{1}^{n_{k+1}}, u_{2}^{n_k})  \right)\\
    \leq &\| u_{1}^{n_k}-u_{1}^{n_{k-1}}\|_b^2+ \| u_{1}^{n_k}-u_{1}^{n_{k-1}}\|_b^2  +\cfrac{\Delta T^2}{2}\left(\Vert u_{1}^{n_{k}}\Vert_a^2+\Vert u_{1}^{n_{k-1}}\Vert_a^2\right)\\
    &+\Delta T^2 \left(a(u_{2}^{n_k},u_{1}^{n_{k-1}})+ a(u_{2}^{n_k},u_{2}^{n_{k-1}})+ a({u}_{1}^{n_k},u_{2}^{n_{k-1}})\right) \\
    & + C \max\{M^2\theta_i^2\} (\| u_{2}^{n_{k+1}}-u_{2}^{n_k}\|_b^2+\|u_{1,i}^{n_{k+1}}-u_{1,i}^{n_k}\|_b^2 +\|u_{2,i}^{n_{k+1}}-u_{2,i}^{n_{k-1}}\|_b^2).
\end{align*}

\subsubsection[The values of u-tilde are known form previous time step]{The values of $u_{2,i}^{n_k-1}$ are known form previous time step}
From \eqref{eq:case2-1-u2_2}, we can also have 
\begin{equation*}
    \left(\begin{array}{c}
u_{2,i}^{n-1}\\
u_{2,i}^{n}
\end{array}\right)=\left(\begin{array}{cc}
2-\tilde{\lambda}_{2,i} & -1\\
1 & 0
\end{array}\right)\left(\begin{array}{c}
u_{2,i}^{n}\\
u_{2,i}^{n+1}
\end{array}\right) -\left(\begin{array}{c}
\tilde{\lambda}_{2,i}  \tilde{u}_{1,i}^{n_k}\\
0
\end{array}\right).
\end{equation*}
With a similar computation as before, we have 
\begin{equation*}   
u_{2,i}^{n-m}-u_{2,i}^{n-m+1}= \alpha_{i,m+1}u_{2,i}^{n} -\alpha_{i,m}u_{2,i}^{n+1}+\gamma_{i,m} \tilde{u}_{1,i}^{n_k},
\end{equation*}

Sum over $m=1,\cdots, M$ both sides of the equation and take $n=n_k$ yields
\begin{align}\label{eq:case2-2_sum}
   u_{2,i}^{n_{k-1}}-u_{2,i}^{n_k}=\cfrac{\sin(2M\theta_{i})}{\sin2\theta_{i}}(u_{2,i}^{n_k}-u_{2,i}^{n_k+1})+(\alpha_{i,M+1}-1)(u_{2,i}^{n_k}+u_{1,i}^{n_k}). 
\end{align}

Add $u_{2,i}^{n_{k+1}}-u_{2,i}^{n_k}=\cfrac{\sin(2M\theta_{i})}{\sin2\theta_{i}} (u_{2,i}^{n_k}-u_{2,i}^{n_k-1})+(\alpha_{i,M+1}-1)(u_{1,i}^{n_k} +u_{2,i}^{n_k})$ with \eqref{eq:case2-2_sum}, we have 
\begin{align*}
&u_{2,i}^{n_{k+1}}-2u_{2,i}^{n_k}+u_{2,i}^{n_{k-1}} \\=& - M\beta_i (u_{2,i}^{n_k+1}-2u_{2,i}^{n_k}-u_{2,i}^{n_k-1}) + 2(\alpha_{i,M+1}-1
    )(u_{2,i}^{n_k}+\tilde{u}_{1,i}^{n_k})\\
    = &( M\beta_i \lambda_{2,i}+2(\alpha_{i,M+1}-1)(u_{2,i}^{n_k}+\tilde{u}_{1,i}^{n_k}) ): = -\eta_i(u_{2,i}^{n_k}+\tilde{u}_{1,i}^{n_k}),
\end{align*}
where we note that 
\begin{align*}
\eta_i &= -M\beta_i \lambda_{2,i}-2(\alpha_{i,M+1}-1)\\
&=2-\cfrac{4\sin2M\theta_i \sin^2\theta_i+2 (\sin((2M+1)\theta_{i})-\sin(2M\theta_i))}{\sin(2M\theta_{i})}\\
 & =4\sin^2(M\theta_i) \approx 4\Delta T^{2}\lambda_{2,i}-O(M^{4}\theta_i^{4}).
\end{align*}

Multiply both sides of $u_{2,i}^{n_{k+1}}-2u_{2,i}^{n_k}+u_{2,i}^{n_{k-1}} + \eta_i(u_{2,i}^{n_k}+\tilde{u}_{1,i}^{n_k}) = 0$ by $u_{2,i}^{n_{k+1}}-u_{2,i}^{n_{k-1}}$, sum over $i$, we have 
\begin{align*}
&\|u_{2,i}^{n_{k+1}}-u_{2,i}^{n_k}\|_b^2+ \Delta T^{2}(a(u_{2,i}^{n_k},u_{2,i}^{n_{k+1}})+\Delta T^{2}a(\tilde{u}_{1,i}^{n_k}, u_{2,i}^{n_{k+1}}) )\\
=& \|u_{2,i}^{n_k}-u_{2,i}^{n_{k-1}}\|_b^2 +\Delta T^{2}(a(u_{2,i}^{n_k},u_{2,i}^{n_{k-1}})+\Delta T^{2}a(\tilde{u}_{1,i}^{n_k},u_{2,i}^{n_{k-1}}) )\\
&+C\max\{M^4\theta_i^4\} (u_{2,i}^{n_k}+\tilde{u}_{1,i}^{n_k})(u_{2,i}^{n_{k+1}}-u_{2,i}^{n_{k-1}}).
\end{align*}
 Sum with \eqref{case2-1_energy2}, we get
 \begin{align*}
&\| u_{1}^{n_{k+1}}-u_{1}^{n_k}\|_b^2 + \cfrac{\Delta T^2}{2}  \left(\Vert u_{1}^{n_{k+1}}\Vert_a^2+\Vert u_{1}^{n_{k}}\Vert_a^2\right)+\|u_{2,i}^{n_{k+1}}-u_{2,i}^{n_k}\|_b^2\\
&+ \Delta T^{2}(a(u_{2,i}^{n_k},u_{2,i}^{n_{k+1}})+\Delta T^{2}a(\tilde{u}_{1,i}^{n_k}, u_{2,i}^{n_{k+1}}) )+ \Delta T^2 
a(u_{2}^{n_k},u_{1}^{n_{k+1}})\\
=& \| u_{1}^{n_k}-u_{1}^{n_{k-1}}\|_b^2 +\cfrac{\Delta T^2}{2}\left(\Vert u_{1}^{n_{k}}\Vert_a^2+\Vert u_{1}^{n_{k-1}}\Vert_a^2\right)+\|u_{2,i}^{n_k}-u_{2,i}^{n_{k-1}}\|_b^2 \\
&+\Delta T^{2}(a(u_{2,i}^{n_k},u_{2,i}^{n_{k-1}})+\Delta T^{2}a(\tilde{u}_{1,i}^{n_k},u_{2,i}^{n_{k-1}}) )+ \Delta T^2 a(u_{2}^{n_k},u_{1}^{n_{k-1}})\\
&+C\max\{M^4\theta_i^4\} (\|u_{2,i}^{n_k}\|^2_b+\|{u}_{1,i}^{n_k}\|^2_b+2\|u_{2,i}^{n_{k+1}}-u_{2,i}^{n_{k-1}}\|^2_b).
\end{align*}
By the definition of discrete energy $\tilde{E}^{n_{k+\frac{1}{2}}}$, we have $\tilde{E}^{n_{k+\frac{1}{2}}}\leq \tilde{E}^{n_{k-\frac{1}{2}}}$. Then follow a similar proof as in Lemma \ref{l1}, we have the stability result for Case 2.

\subsection{Adaptive multirate time splitting algorithm}
\label{section: Adaptive multirate time splitting algorithm}
In this section, we will propose a residual-based adaptive multirate algorithm \cite{adaptivemultirate} to select a suitable time step size for the decoupled splitting system. The proposed approach involves the construction of an error indicator derived from residuals. The indicators provide an estimation of discretization errors when coarse time steps are employed for both equations in \eqref{D1}. Based on this estimation, adaptive time step refinement can be selectively applied to components with large errors.

We first derive the error indicators. Let $U(t)$ be the piecewise quadratic function with $U(T^n)=u_H^n=u_{H,1}^n+u_{H,2}^n$ such that on $(T_{n-1},T_{n+1}]$
\begin{align}\label{G1}
u(t)=u^n+\frac{u^{n+1}-u^{n-1}}{2\Delta T}(t-T_n)+\frac{u^{n+1}-2u^n+u^{n-1}}{2\Delta T^2}(t-T_n)^2,
\end{align}
and let $F(t)$ be the piecewise constant such that on $(T_{n-1},T_{n+1}]$, it is equal to the $L^2-$projection of $f^{n+1}$ onto multiscale space $V_H$, i.e.,
\begin{align}\label{G2}
(F(t),v)=(f^{n+1},v),\ \forall v\in V_H.
\end{align}
Define $X_n:=L^2((T_{n-1},T_{n+1}];H_0^1(\Omega))$ equipped with the inner product and norm as
\begin{align*}
(u,v)_{X_n}&=\int_{T_{n-1}}^{T_{n+1}}\left(\int_{\Omega}uv+\int_{\Omega}\kappa\nabla u\cdot\nabla v\right)dt,\\ 
\Vert v\Vert_{X_n}&=\left(\int_{T_{n-1}}^{T_{n+1}}(\Vert v\Vert^2+\Vert v\Vert_a^2)dt\right)^{\frac{1}{2}}.
\end{align*}
Define a constant $\gamma$ depending on the multiscale space $V_{H,1}$ and $V_{H,2}$ as
\begin{align*}
\gamma=\sup\limits_{\substack{v_1\in L^2((T_{n-1},T_{n+1}];V_{H,1})\\ v_2\in L^2((T_{n-1},T_{n+1}];V_{H,1})}}\frac{(v_1,v_2)}{\Vert v_1\Vert_{X_n}\Vert v_2\Vert_{X_n}}<1.
\end{align*}
Let the integral of the residual over the time interval $(T_{n-1},T_{n+1}]$ be
\begin{align*}
\int_{T_{n-1}}^{T_{n+1}}(R(u(t)),v)dt=\int_{T_{n-1}}^{T_{n+1}}(f(t),v)-b(u''(t),v)-a(u(t),v)dt
\end{align*}
and define the dual norm of the residual
\begin{align*}
\mathcal{R}_v=\sup\limits_{\substack{v\in L^2((T_{n-1},T_{n+1}],V_H)\\ \Vert v\Vert_{X_n}=1}}\int_{T_{n-1}}^{T_{n+1}}(R(u(t)),v)dt.
\end{align*}
Then we have following estimates.

\begin{thm}\label{thm:adaptive}
Define the following error indicators:
\begin{align}\label{G3}
\begin{aligned}
\eta_1^n=&(\sqrt{\frac{\Delta T}{2}}+\sqrt{\frac{\Delta T}{6}}+\sqrt{\frac{\Delta T}{10}}) (\Vert u_{H,1}^{n+1}-u_{H,1}^{n}\Vert_a+\Vert u_{H,1}^{n}-u_{H,1}^{n-1}\Vert_a)\\
&+(\sqrt{\frac{\Delta T}{6}}+\sqrt{\frac{\Delta T}{10}}) (\Vert u_{H,2}^{n+1}-u_{H,2}^{n}\Vert_a+\Vert u_{H,2}^{n}-u_{H,2}^{n-1}\Vert_a),\\
\eta_2^n=&(\sqrt{\frac{\Delta T}{6}}+\sqrt{\frac{\Delta T}{10}}) (\Vert u_{H,1}^{n+1}-u_{H,1}^{n}\Vert_a+\Vert u_{H,1}^{n}-u_{H,1}^{n-1}\Vert_a\\
&+\Vert u_{H,2}^{n+1}-u_{H,2}^{n}\Vert_a+\Vert u_{H,2}^{n}-u_{H,2}^{n-1}\Vert_a).
\end{aligned}
\end{align}
Then there exist constant $C_3$ and $C_4$ such that
\begin{align*}
\mathcal{R}_v\leq C_3(1-\gamma)^{-\frac{1}{2}}((\eta_1^n)^2+(\eta_2^n)^2)^{\frac{1}{2}}+C_4\Vert f(t)-F(t)\Vert_{L^2((T_{n-1},T_{n+1}];L^2(\Omega))}.
\end{align*}
\end{thm}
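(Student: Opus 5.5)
The plan is to bound the residual by inserting the discrete scheme \eqref{D1} (Case 1) as an orthogonality relation and then estimating the time-interpolation defects term by term. Fix $v\in L^2((T_{n-1},T_{n+1}];V_H)$ with $\Vert v\Vert_{X_n}=1$ and split it as $v=v_1+v_2$ with $v_i(t)\in V_{H,i}$. The definition of $\gamma$ gives $\Vert v_1\Vert_{X_n}^2+\Vert v_2\Vert_{X_n}^2\le (1-\gamma)^{-1}\Vert v\Vert_{X_n}^2$, which is where the factor $(1-\gamma)^{-1/2}$ in Theorem \ref{thm:adaptive} comes from. Since $u$ in \eqref{G1} is quadratic on $(T_{n-1},T_{n+1}]$, the second derivative $u''$ is constant and equals the central difference $(u^{n+1}-2u^n+u^{n-1})/\Delta T^2$. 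So the $b(u'',\cdot)$ term matches the inertial term of \eqref{D1} exactly. We also write $(f,v)=(F,v)+(f-F,v)$; the part $(f-F,v)$ is bounded by Cauchy--Schwarz and $\Vert v\Vert_{L^2L^2}\le \Vert v\Vert_{X_n}$, which yields the $C_4\Vert f-F\Vert$ term (with $C_4$ also absorbing the discrepancy between $b$ and the $L^2$ product on $F$, if needed).

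For each $i$, subtract the $i$-th equation of \eqref{D1}, applied pointwise in time to $v_i(t)$, from the residual. What remains is
\[
-\int_{T_{n-1}}^{T_{n+1}} a\big(u(t)-\tfrac12(u_{H,1}^{n+1}+u_{H,1}^{n-1})-u_{H,2}^{n},v_1\big)\,dt
\]
for $i=1$, and
\[
-\int_{T_{n-1}}^{T_{n+1}} a\big(u(t)-u_{H}^{n},v_2\big)\,dt
\]
for $i=2$. We then expand $u(t)$ using \eqref{G1}, separating the $V_{H,1}$ and $V_{H,2}$ components. The linear-in-time parts involve $\frac{u^{n+1}-u^{n-1}}{2\Delta T}(t-T_n)$, and $u^{n+1}-u^{n-1}$ is the sum of the two consecutive differences. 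The quadratic part involves $\frac{u^{n+1}-2u^n+u^{n-1}}{2\Delta T^2}(t-T_n)^2$, and $u^{n+1}-2u^n+u^{n-1}$ is the difference of the two consecutive differences. For the $V_{H,1}$ component in the first equation there is an extra constant defect $u_{H,1}^n-\frac12(u_{H,1}^{n+1}+u_{H,1}^{n-1})=-\frac12(u_{H,1}^{n+1}-2u_{H,1}^n+u_{H,1}^{n-1})$. This term is the source of the additional $\sqrt{\Delta T/2}$-type coefficient that appears only in $\eta_1^n$.

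Each resulting term has the form $\int w(t)\,a(z,v_i)\,dt$, with $z$ a consecutive difference and $w$ one of the weights $1$, $(t-T_n)/\Delta T$, or $(t-T_n)^2/\Delta T^2$. We bound it by $\Vert z\Vert_a\,\Vert w\Vert_{L^2(T_{n-1},T_{n+1})}\,\Vert v_i\Vert_{X_n}$. The weight norms are explicit, $\int_{-\Delta T}^{\Delta T}(s/\Delta T)^{2}ds=2\Delta T/3$ and $\int(s/\Delta T)^4ds=2\Delta T/5$, and after absorbing the factors $1/2$ they produce the coefficients $\sqrt{\Delta T/6}$ and $\sqrt{\Delta T/10}$ (and $\sqrt{\Delta T/2}$ for the constant defect) up to constants put into $C_3$. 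Collecting terms, the contribution paired with $v_i$ is bounded by $C\eta_i^n\Vert v_i\Vert_{X_n}$. Cauchy--Schwarz over $i$ then gives $\big((\eta_1^n)^2+(\eta_2^n)^2\big)^{1/2}(1-\gamma)^{-1/2}$, and taking the supremum over $v$ finishes the proof.

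The main obstacle is consistency of the bookkeeping, not any single estimate. One must check carefully which components appear in each indicator: $v_1$ sees the implicit average in $u_{H,1}$ but the explicit value $u_{H,2}^n$, while $v_2$ sees only explicit values. One must also check that the constant $\gamma$, which is defined via $(\cdot,\cdot)$ in $X_n$, is the right quantity for splitting the $X_n$-norm. If the $b$-versus-$L^2$ mismatch in the inertial term does not cancel exactly, I would first try to absorb it into $C_4$. Failing that, I would add an $\Vert u''\Vert$-type lumping error term, bounded again by consecutive differences via \eqref{D1}.
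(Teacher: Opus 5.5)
Your proposal is correct and is essentially the paper's proof: you cancel the inertial term using that $u''$ equals the central difference, subtract the two equations of \eqref{D1} to isolate the defects $R_1,R_2$, bound them with the weights $1$, $(t-T_n)/(2\Delta T)$, $(t-T_n)^2/(2\Delta T^2)$ to get $\eta_1^n,\eta_2^n$, and pass from $\Vert v_1\Vert_{X_n}^2+\Vert v_2\Vert_{X_n}^2$ to $\Vert v\Vert_{X_n}^2$ via $\gamma$. The one step you should justify rather than hedge is the exact cancellation of the inertial term, which is the fact the paper invokes first: $b(u'',v)$ also contains the cross terms pairing the $V_{H,2}$-part of $u''$ with $v_1$ and the $V_{H,1}$-part with $v_2$, and these vanish because $\Pi V_{H,i}\subset V_{\text{aux},i}$ and $V_{\text{aux},1}\perp V_{\text{aux},2}$ in $L^2$, so $b(v_1,v_2)=0$; there is no $b$-versus-$L^2$ issue here since the residual is defined with $b$, and taking the scheme's forcing as $(f^{n+1},v_i)=(F,v_i)$, as the paper implicitly does, makes $(F,v)$ cancel exactly, so your fallbacks of absorbing a mismatch into $C_4$ or adding a lumping-error term are not needed (and would not give the stated bound anyway).
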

The proof is straightforward, and we putsketch of proof for the theorem in the Appendix.

\section{Numerical experiments}
\label{section: Numerical experiments}
This section details the numerical tests conducted to assess the effectiveness and efficiency of our numerical scheme. In the following examples, we consider $\Omega=[0,1]^2$, we apply zero Neumann boundary conditions, and initial conditions $u(\cdot,0)=0, u_t(\cdot,0)=0$.
The true solutions are approximated using FEM on fine grid (grid size $h$) in space, and implicit Euler scheme in time (step size $\delta t$). To examine only the error in temporal discretization, our reference solutions are computed using a finer time step size $\delta t$ with the implicit Euler scheme, and the spatial discretizations were as discussed in Section \ref{section: Construction of two multiscale subspaces}. In all examples, we let $\delta_1$ and $\delta_2$ denotes the thresholds.

\subsection{Example 1: high-contrast media and point sourece}
\label{section: Numerical experiment 1}
In our first numerical test, we take following parameters in the time discretizations
\begin{align*}
T=0.3,\ \Delta T=1\times10^{-3},\ \Delta t=1\times10^{-4},\ \delta t=5\times10^{-5}.
\end{align*}
We consider the following smooth source term
\begin{align*}
f(x,y,t)=\sin{(f_0 t)}f_{init},
\end{align*}
where $f_0$ is the frequency. In our experiments, we will consider the different values for $f_0$ to illustrate the feasibility and effectiveness of our proposed algorithm. The medium parameter $\kappa$ and the source $f_{init}$ are presented in Figure \ref{ex1_kappa_f}. As we see that the permeability field is high contrast with contrast ratio $10^3$, $f_{init}$ is a point source in the middle of the computational domain.
\begin{figure}[H]
\centering
\includegraphics[width=0.8\linewidth]{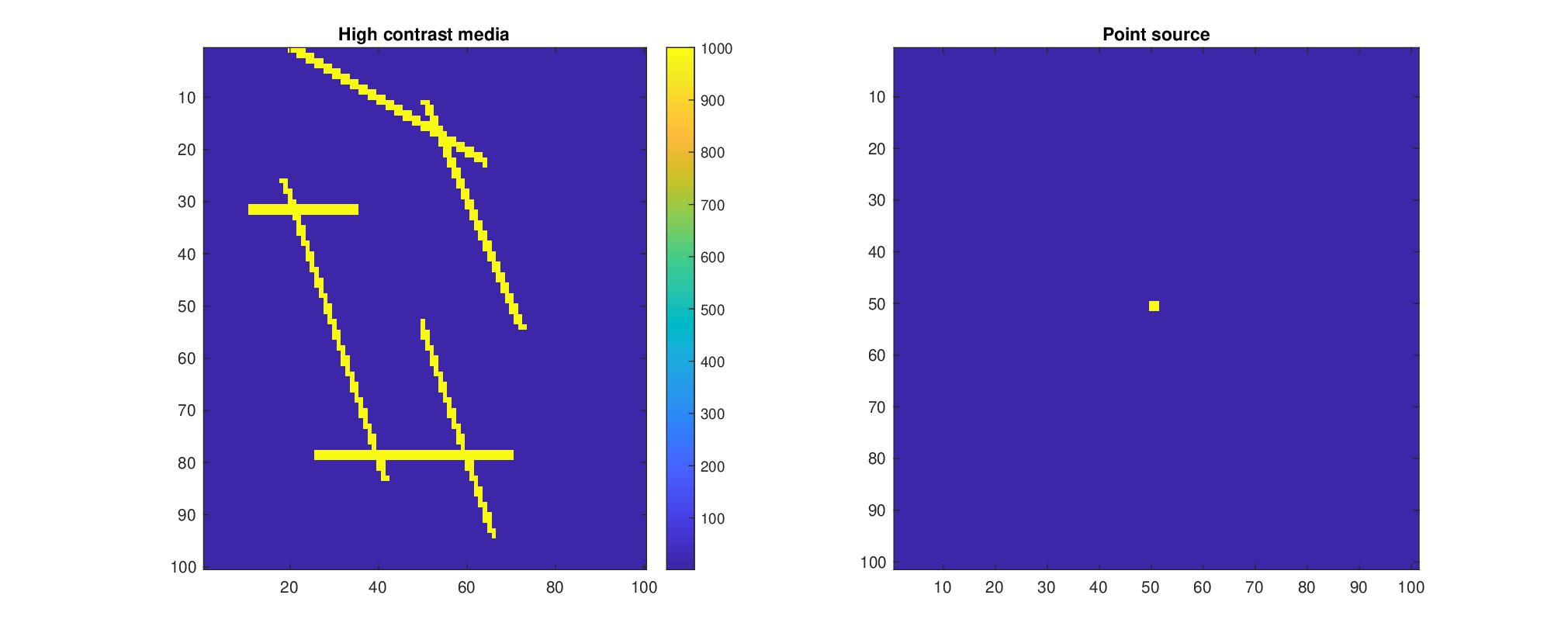}
\caption{Example 1. \textbf{Left:} $\kappa$; \textbf{Right:} $f_{init}$.}\label{ex1_kappa_f}
\end{figure}
\subsubsection[The first case]{The first case: $f_0=5$}
\label{section:The first case: $f_0=5$}
Here, the frequency in the source term is set to be $f_0=5$. In this case, we take $H=\frac{1}{10},\ h=\frac{1}{100}$. In Figure \ref{ex1_5_errors}, we show the relative $L^2$ errors and relative energy errors between the reference solution and splitting scheme using uniform coarse time steps, using fine time steps and using adaptive time refinement, correspondingly. The error indicators in the right of Figure \ref{ex1_5_errors} illustrate that our proposed numerical algorithm selects coarse time step for $u_1$ and fine time stpe size for $u_2$, and the results for the multirate step size coincide with the results when one uses both fine step sizes for the two equations in the splitting scheme. This indicates that our multirate method can save computational cost while maintaining accuracy. The solution profile at the final time are presented in Figure \ref{ex1_5_solT}. We can observe that the solution obtained from our method matches the approximated true solution computed using FEM (with grid size $h$) and implicit Euler(with time step size$\delta t$). The degrees of freedom (DOF) for FEM is $10201$, and the DOF for multiscale method is $826$ (where $\text{dim}(V_{H,1}) = 26$ and we take $l_i=8$ in $V_{aux,2}$). %At first, the algorithm refines time step for the first equation and chooses coarse time step for the second equation, then it eventually stabilizes to the fine-fine case at the latter part of the simulation. The number of refined coarse blocks for both equations is around $195/400$. This demonstrate that our proposed algorithm can achieve accuracy comparable to that of fine-fine solutions while significantly reducing computational cost.

\begin{figure}[H]
\centering
\includegraphics[width=0.3\linewidth]{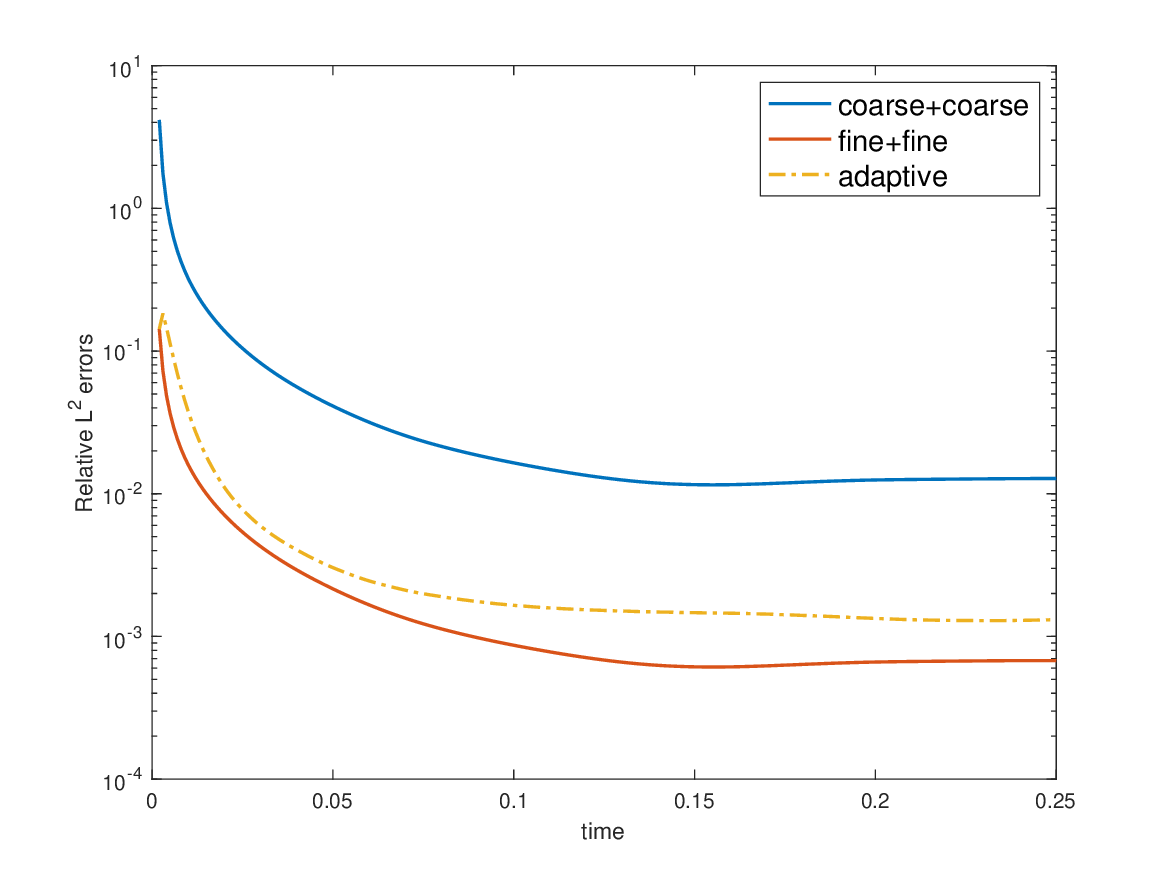}
\includegraphics[width=0.3\linewidth]{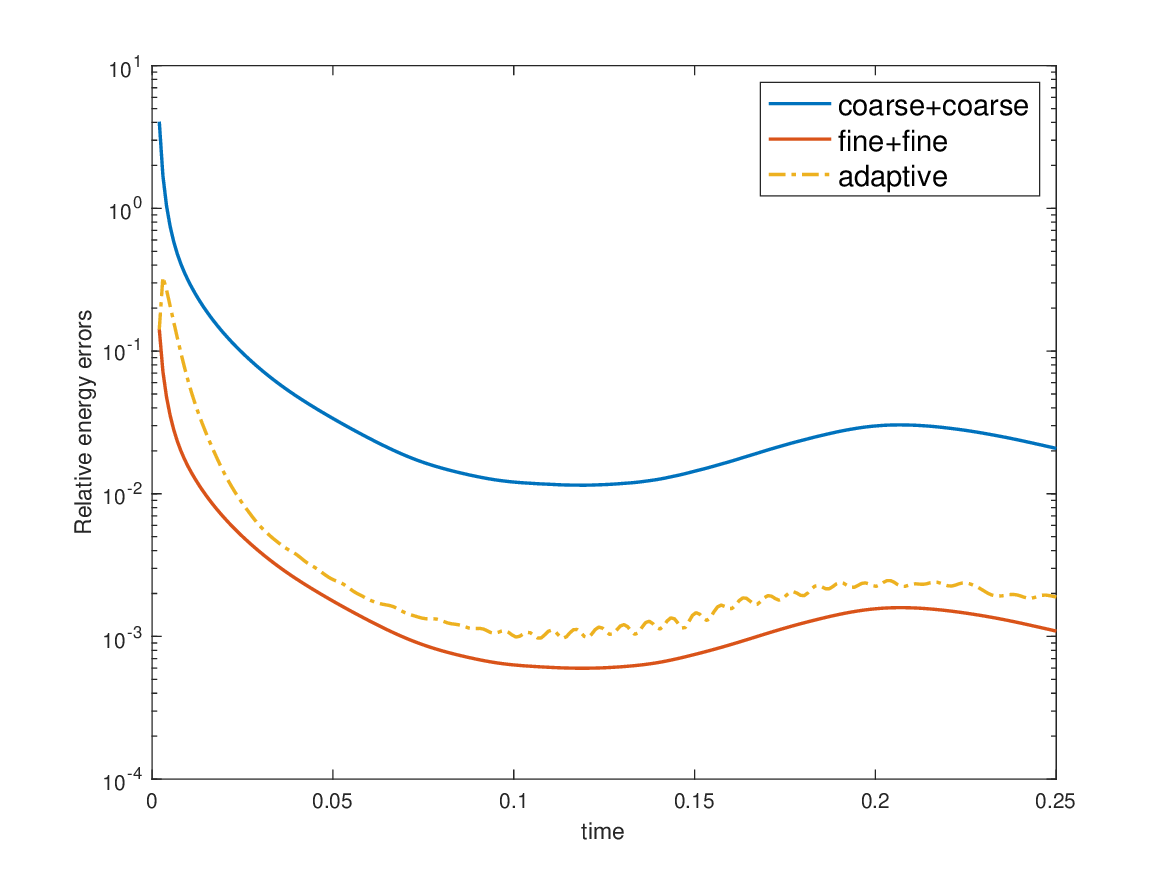}
\includegraphics[width=0.3\linewidth]{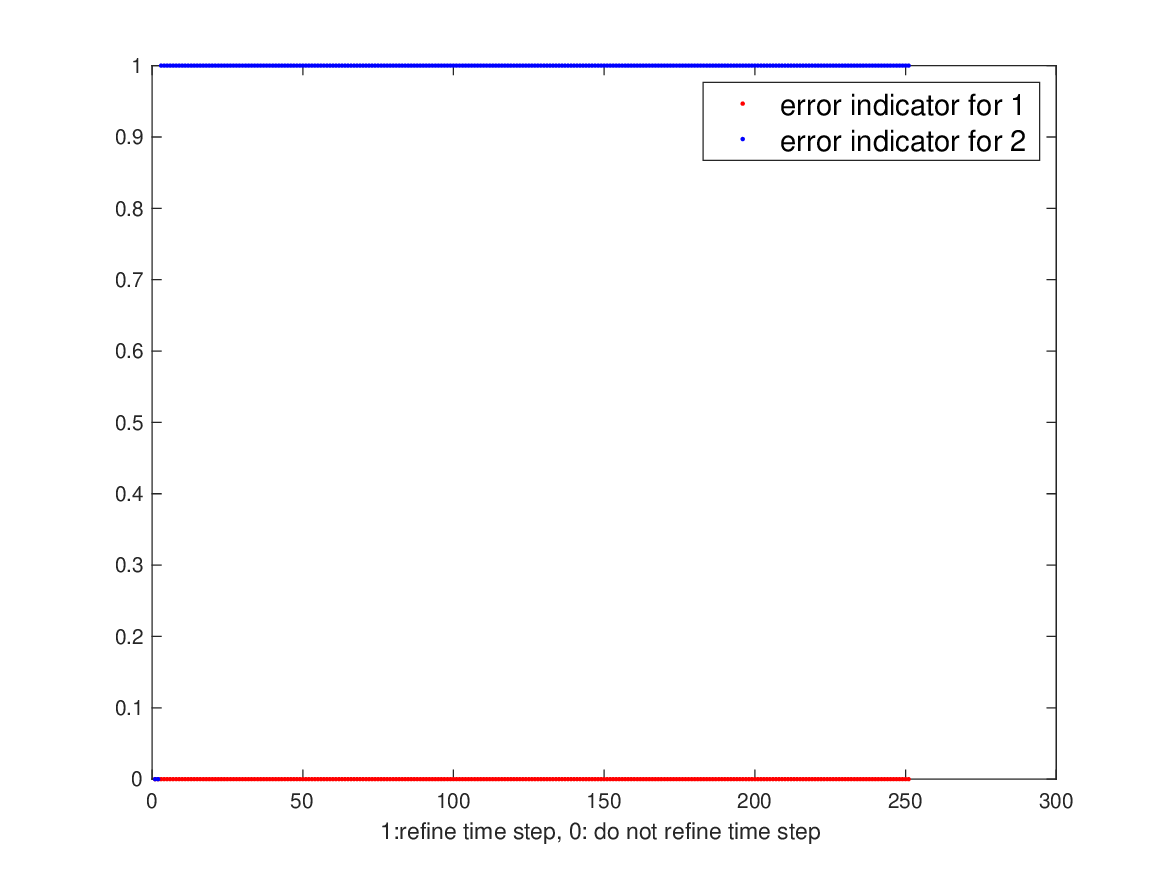}
\caption{Example 1. \textbf{Left:} relative $L_2$ error, \textbf{Middle:} relative energy error. \textbf{Right:} refinement history.}\label{ex1_5_errors}
\end{figure}

\begin{figure}[H]
\centering
\includegraphics[width=1.0\linewidth]{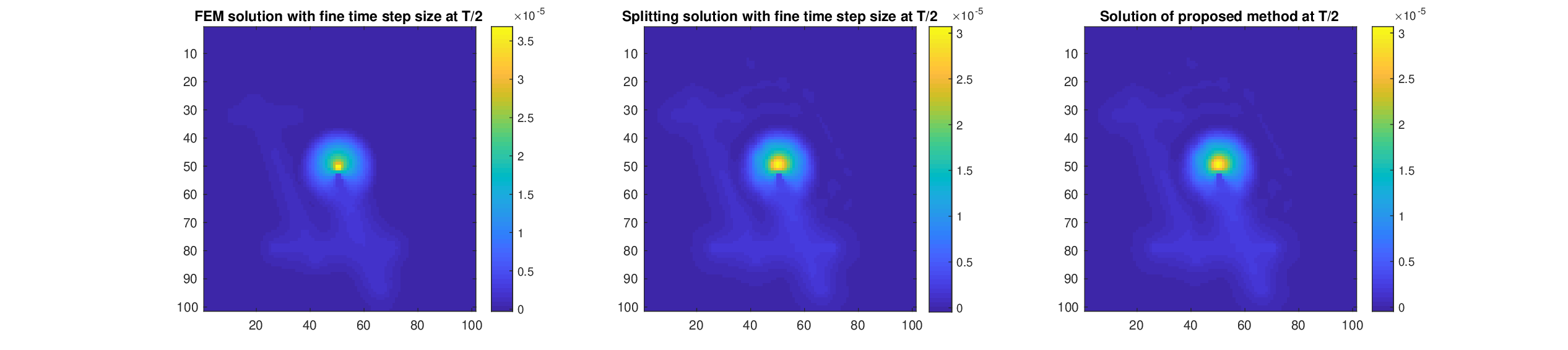}
\includegraphics[width=1.0\linewidth]{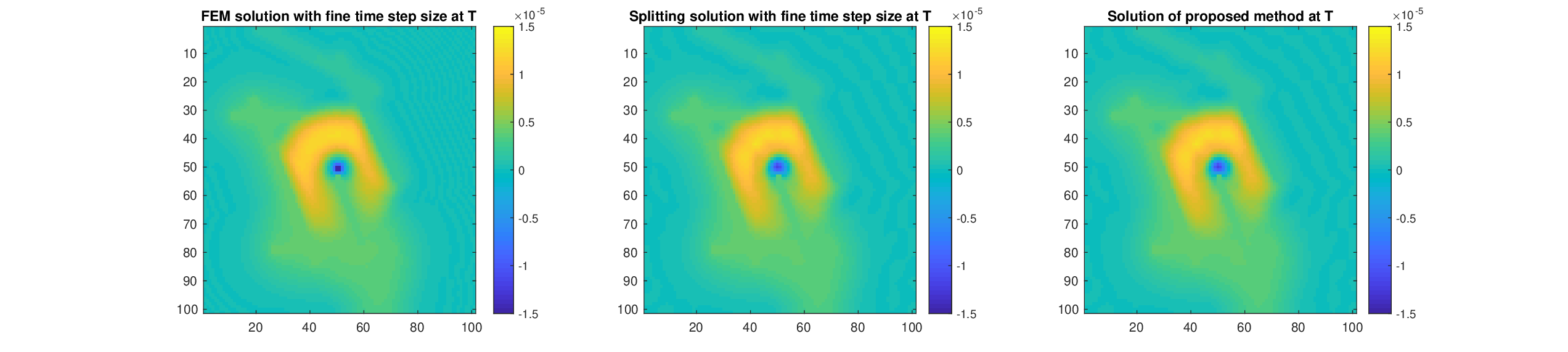}
\caption{Example 1. Left: FEM ($h$) and implicit Euler( $\delta t$). Middle: multiscale ($H$) splitting with fine step size $\delta t$), Right: multiscale ($H$) splitting with adaptive step size. Top: solutions at $T/2$; bottom: solutions at $T$.}\label{ex1_5_solT}
\end{figure}

\subsubsection[The second case]{The second case: $f_0=40$}
\label{section:The second case: $f_0=40$}
In this subsection, we take the frequency $f_0=40$, and the mesh size $H=\frac{1}{20},\ h=\frac{1}{200}$ correspondingly. Similarly, the relative errors  between the reference solution and splitting scheme using uniform coarse time steps, using fine time steps and using adaptive time refinement are presented in Figure \ref{ex1_40_errors}, correspondingly. The error indicators are the same as in the right of Figure \ref{ex1_40_errors}, so we omit it here. We see that the errors for the multirate method are still very close to the errors when we use fine-fine step sizes for the two equations in the splitting scheme.  The solution profile at the time $T/2$ and final time $T$ are presented in Figure \ref{ex1_40_solT}, which demonstrate a good agreement between approximate true solutions and solutions obtained from our method. The degrees of freedom (DOF) for FEM is $40401$, and multiscale method is $3252$ (where $\text{dim}(V_{H,1}) = 52$ and we take $l_i=8$ in $V_{aux,2}$). The mass lumping errors are shown in Figure \ref{ex1_ml_err}.

\begin{figure}[H]
\centering
\includegraphics[width=0.3\linewidth]{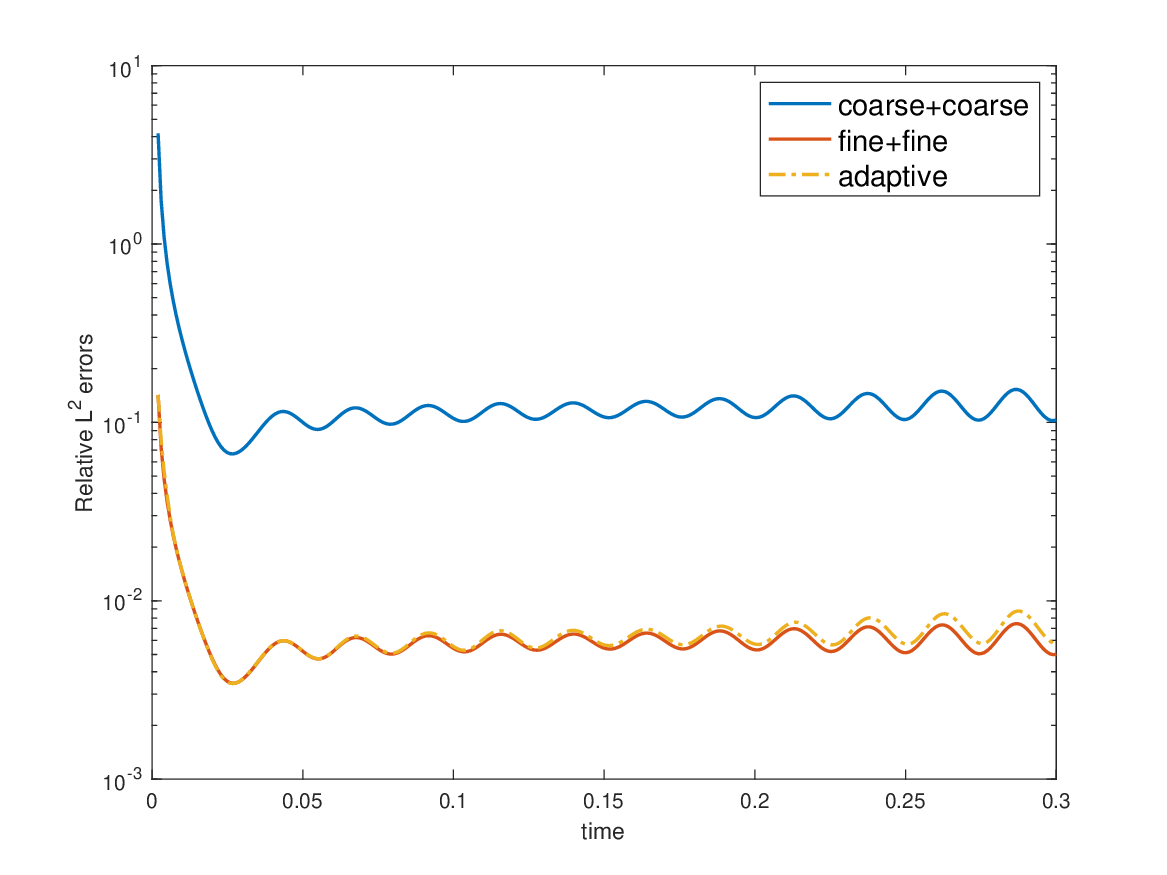}
\includegraphics[width=0.3\linewidth]{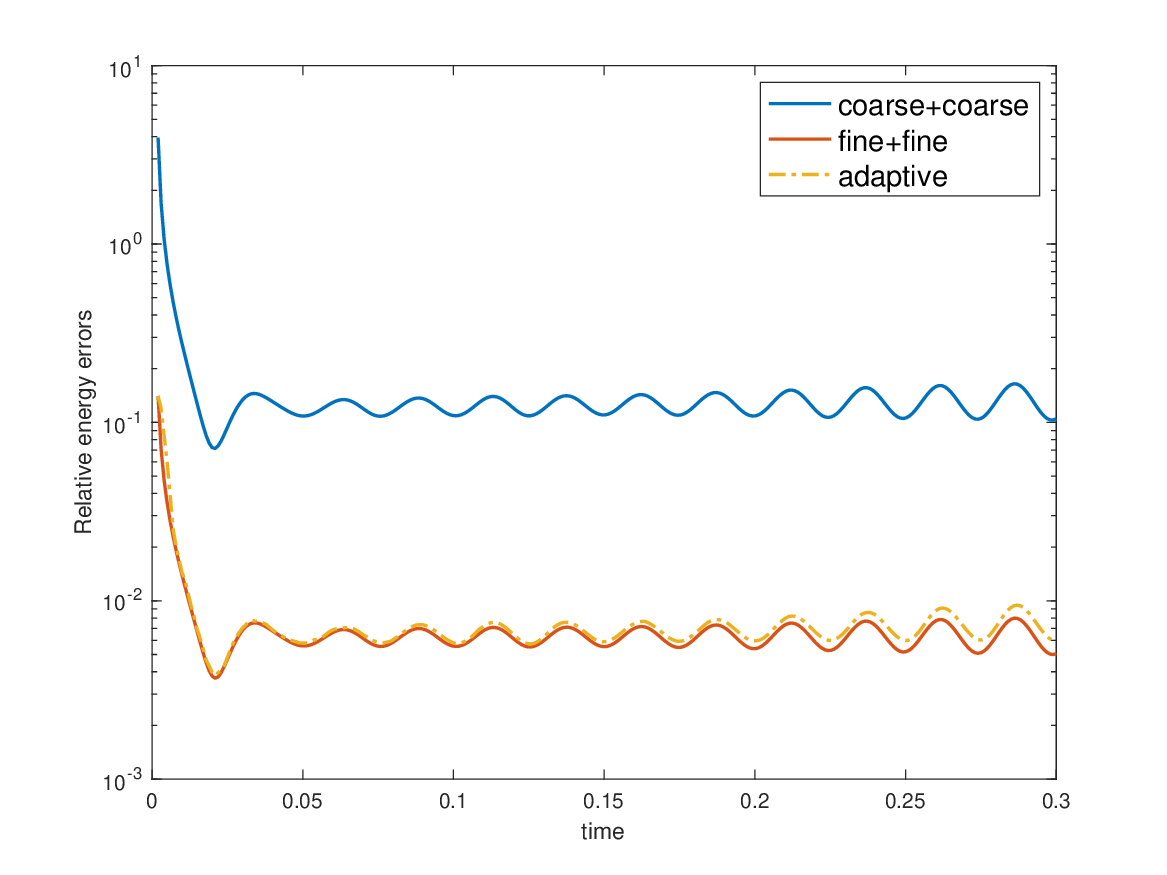}
\caption{Example 1. \textbf{Left:} relative $L_2$ error, \textbf{Right:} relative energy error.}\label{ex1_40_errors}
\end{figure}

\begin{figure}[H]
\centering
\includegraphics[width=1.0\linewidth]{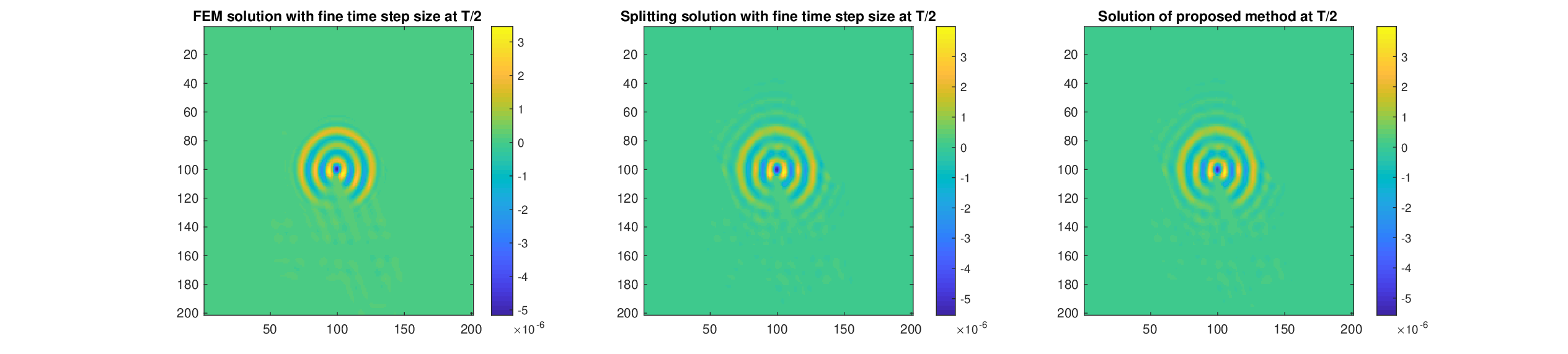}
\includegraphics[width=1.0\linewidth]{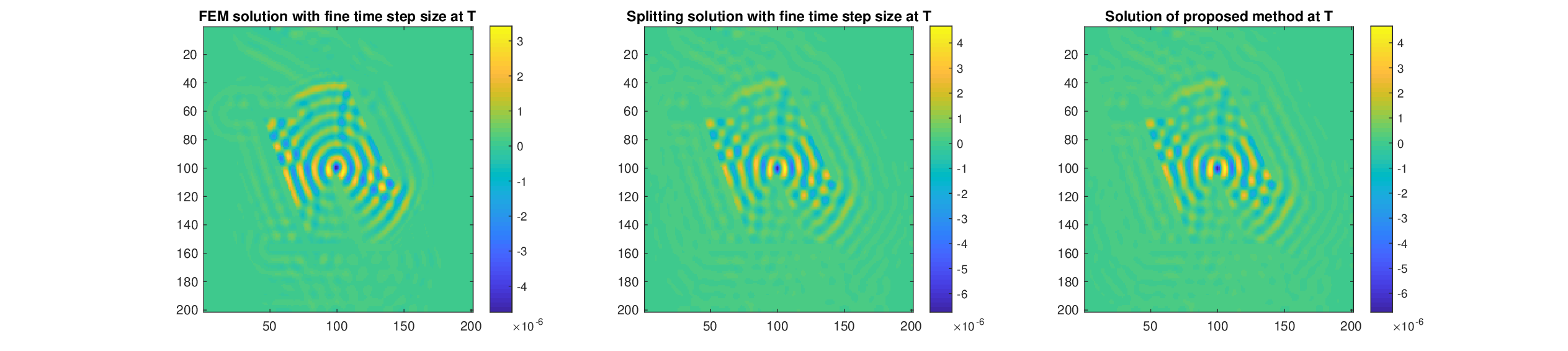}
\caption{Example 1. Left: FEM ($h$) and implicit Euler( $\delta t$). Middle: multiscale ($H$) splitting with fine step size $\delta t$), Right: multiscale ($H$) splitting with adaptive step size. Top row: solution at middle time. Bottom row: solution at final time. }\label{ex1_40_solT}
\end{figure}

\begin{figure}[H]
\centering
\includegraphics[width=0.3\linewidth]{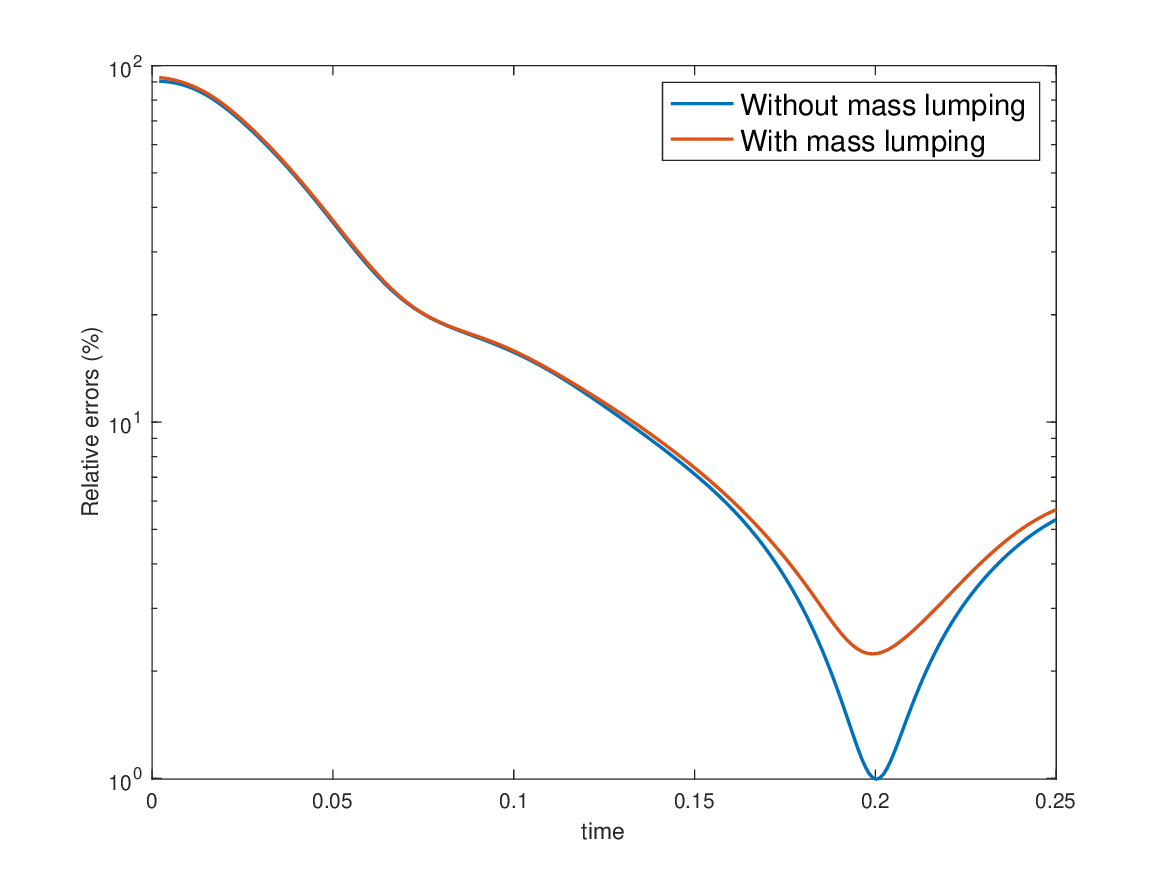}
\includegraphics[width=0.3\linewidth]{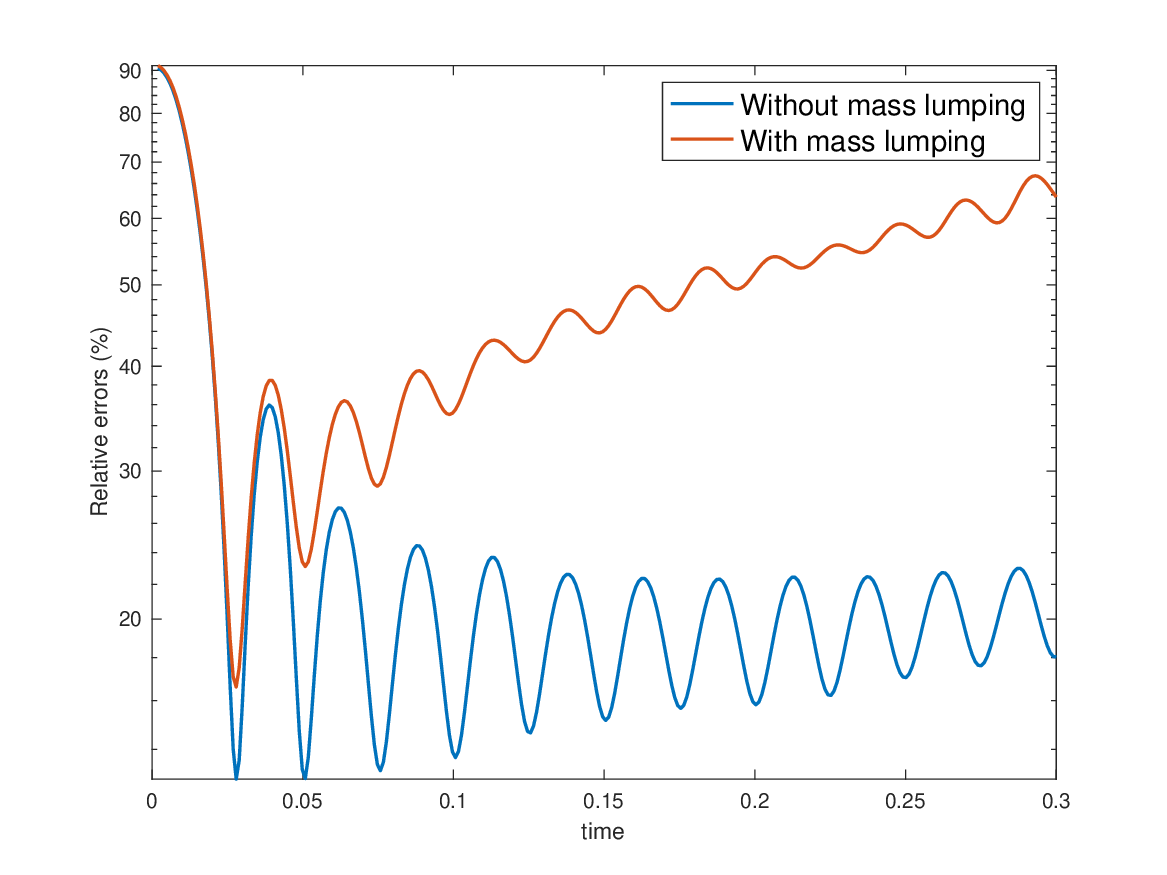}
\caption{Example 1.  The errors between the FEM and multiscale solutions with or without mass lumping. Left: $f_0=5\pi$; Right: $f_0=40\pi$}\label{ex1_ml_err}
\end{figure}

\subsection{Example 2: Modified Marmousi model}
\label{section: Numerical experiment 2}

In our second example, we take $\ H=0.1,\ h=0.01$ and 
\begin{align*}
t\in [0.05,0.20],\ \Delta T=5\times10^{-4},\ \Delta t=5\times10^{-5},\ \delta t=2.5\times10^{-5}.
\end{align*}
We consider the following smooth source term
\begin{align*}
f(x,y,t)&=f_{xy}\exp(-\pi^2 f_0^2(t-\frac{2}{f_0})^2)(1-2\pi^2 f_0^2(t-\frac{2}{f_0})^2),\\
f_{xy} &= 5000 \exp(-\frac{(x-\frac{1}{2})^2+(y-\frac{1}{2})^2}{\sigma^2} )
\end{align*}
where $f_0=15$, $\sigma^2 = 2\times10^{-4}$. The medium parameter $\kappa$ is a modified Marmousi moddel, and the source $f_{xy}$ are presented in Figure \ref{ex2_kappa_f}.

\begin{figure}[H]
\centering
\includegraphics[width=0.6\linewidth]{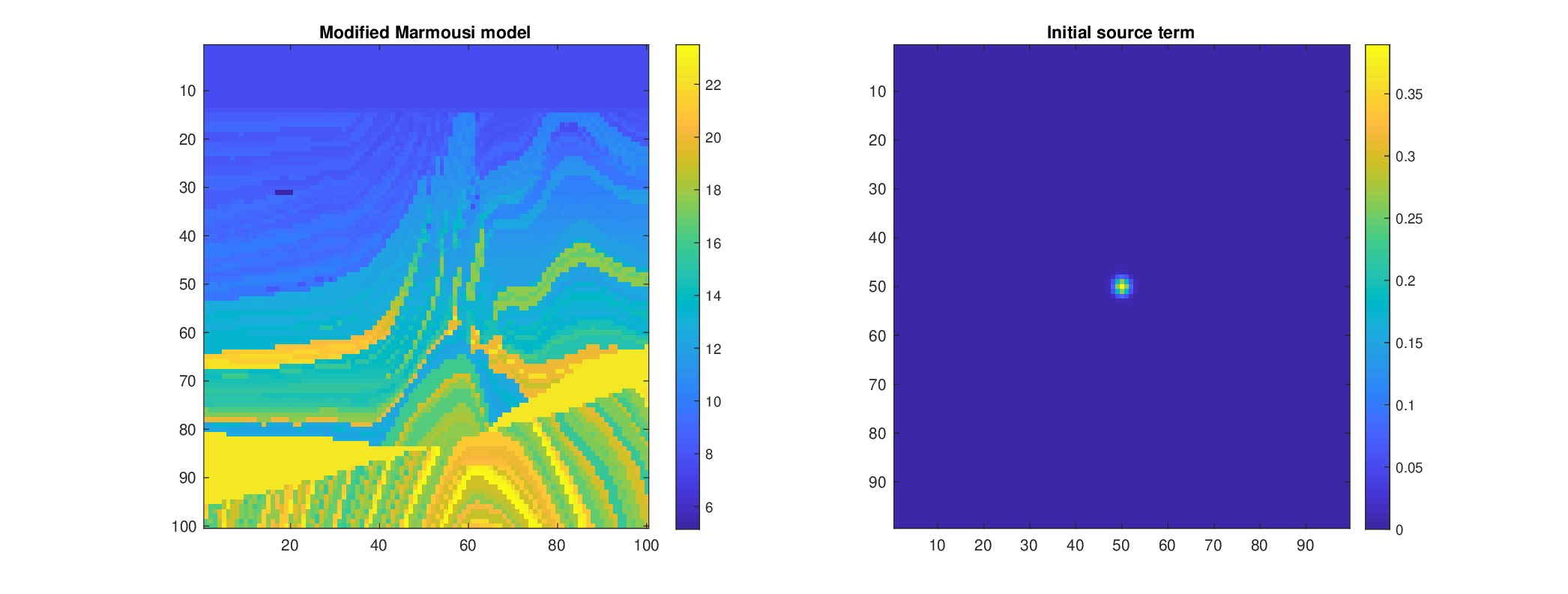}
\includegraphics[width=0.3\linewidth]{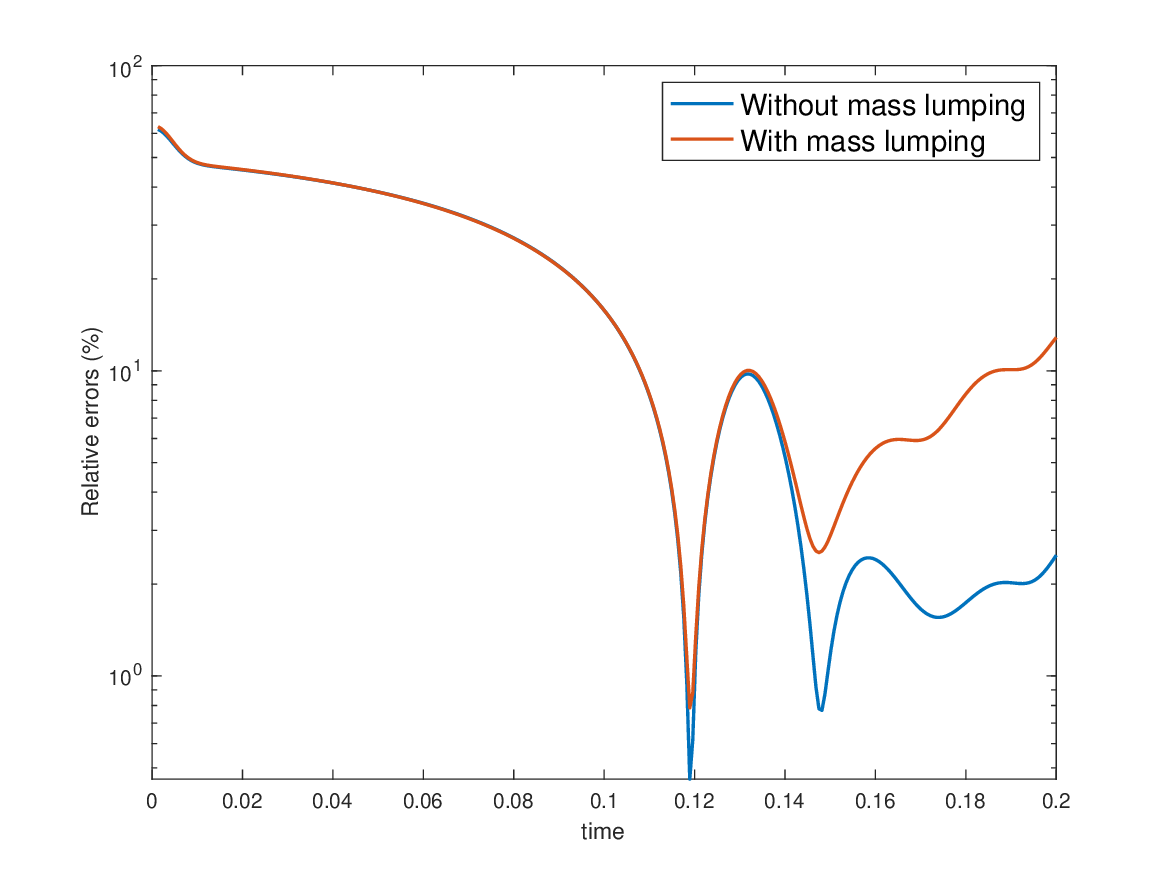}
\caption{Example 2. Left: $\kappa$; Middle: $f_{init}$. Right: the errors between the FEM and multiscale solutions with or without mass lumping.}\label{ex2_kappa_f}
\end{figure}

 The solution profiles at the time $T=0.15, 0.2, 0.25$ are presented in Figure \ref{ex2_solT}, which demonstrate a good agreement between approximate true solutions and solutions obtained from our method. Our implementation is conducted on MATLAB. The computational time for the solution using finite element with implicit time discretization is 46.09 seconds, while the solution using NLMC partially splitting sheme with mass lumping is 13.46 seconds, and the adaptive scheme proposed in this work takes  4.29 seconds. The degrees of freedom (DOF) for FEM is $10201$, and multiscale method is $839$ where $\text{dim}(V_{aux,1}) = 39$ we take $l_i=8$ in $V_{aux,2}$. We compare errors between the splitting scheme using uniform coarse time steps, using fine time steps and using adaptive time refinement with reference solutions, and the errors are shown in the left and middle of Figure \ref{ex2_errors}.  The refinement history are shown in the right of \ref{ex2_errors}. This demonstrate that our proposed algorithm can achieve accuracy comparable to that of fine-fine solutions while reducing computational cost.%At first, the algorithm take coarse time step for the second equation, and when the source term fluctuates significantly, it begins to refine the second equation. It eventually stabilizes to the fine-fine case at the latter part of the simulation. The number of refined coarse blocks for both equations is around $232/400$. 

\begin{figure}[H]
\centering
\includegraphics[width=1.0\linewidth]{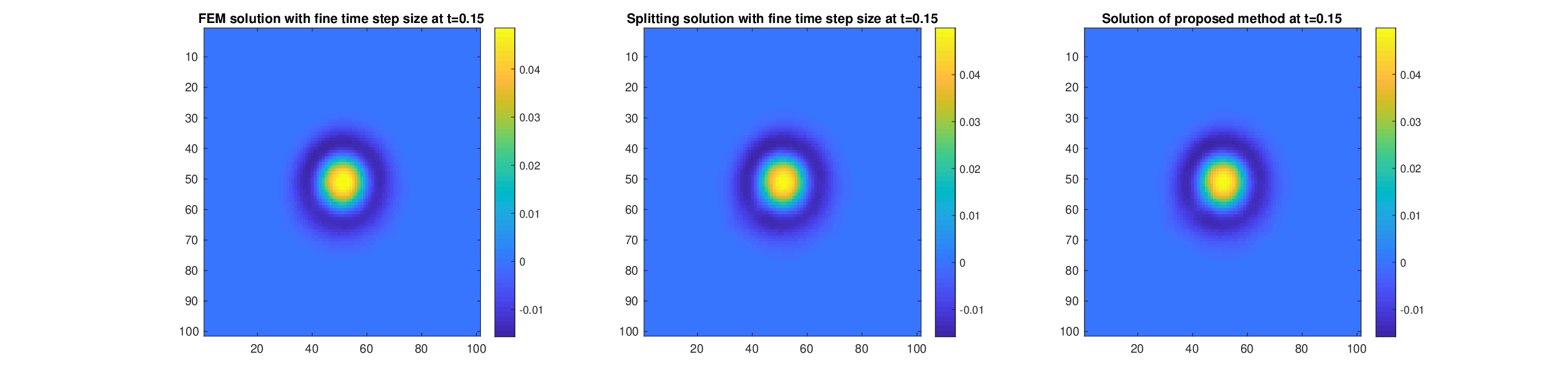}
\includegraphics[width=1.0\linewidth]{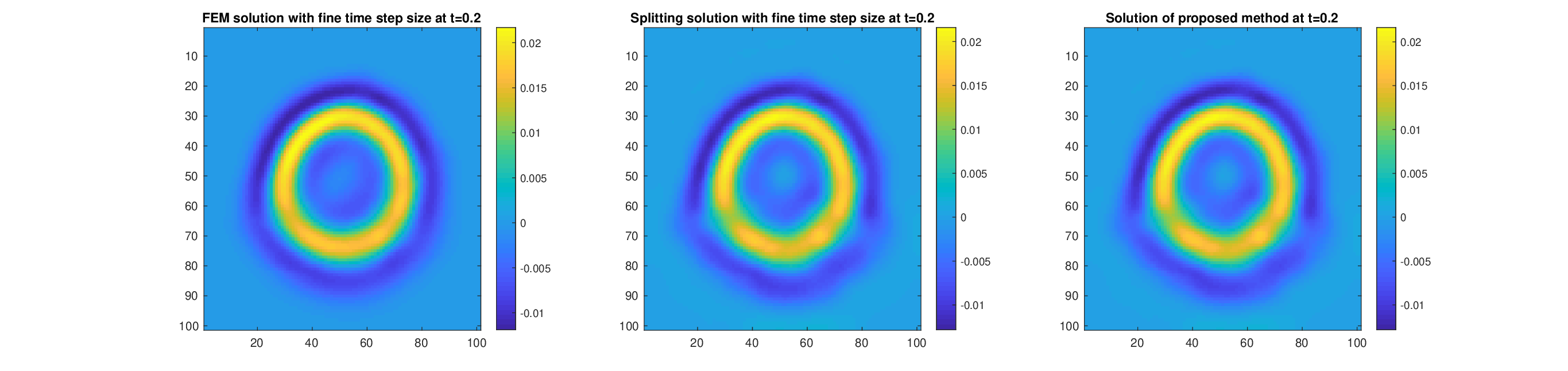}
\caption{Example 2. Solution at final time. Left: FEM ($h$) and implicit Euler( $\delta t$). Middle: multiscale ($H$) splitting with fine step size $\delta t$), Right: multiscale ($H$) splitting with adaptive step size. Top row: $T=0.15$, second row: $T=0.2$.}\label{ex2_solT}
\end{figure}

\begin{figure}[H]
\centering
\includegraphics[width=0.3\linewidth]{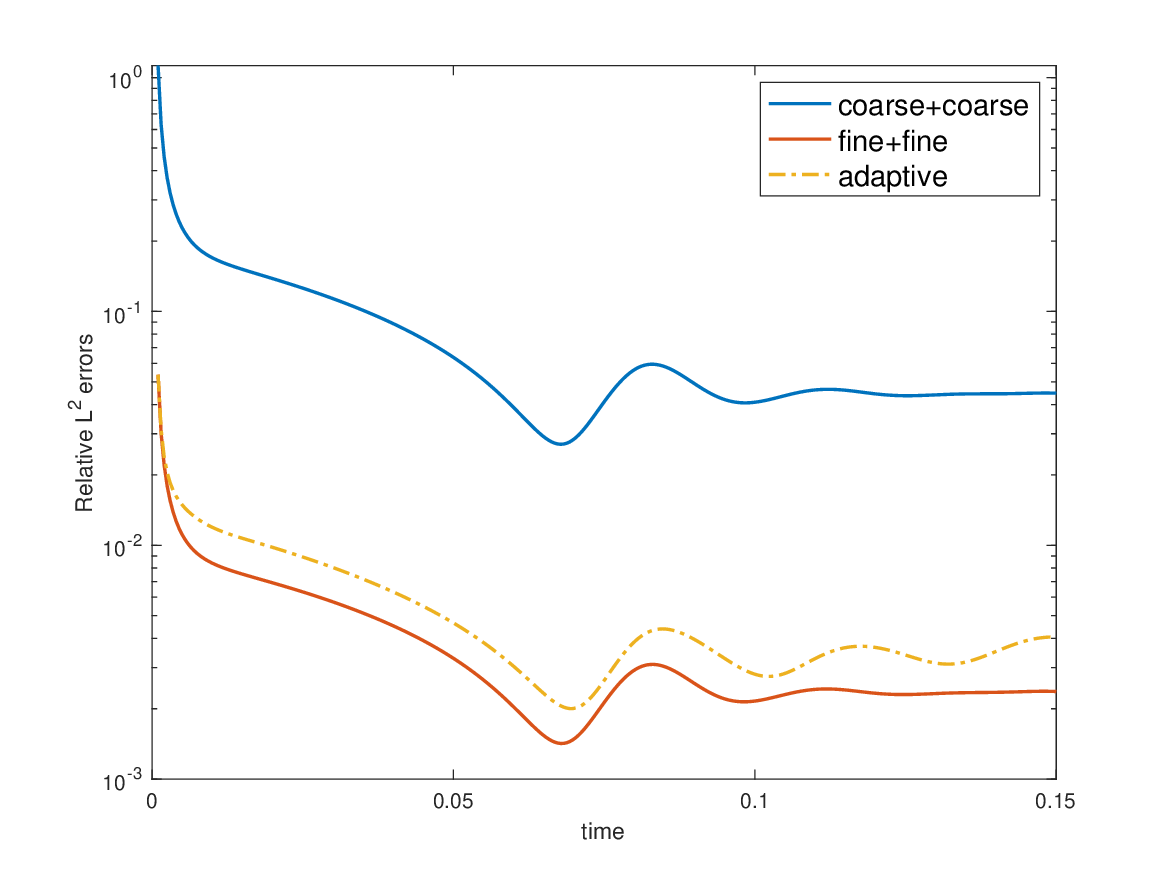}
\includegraphics[width=0.3\linewidth]{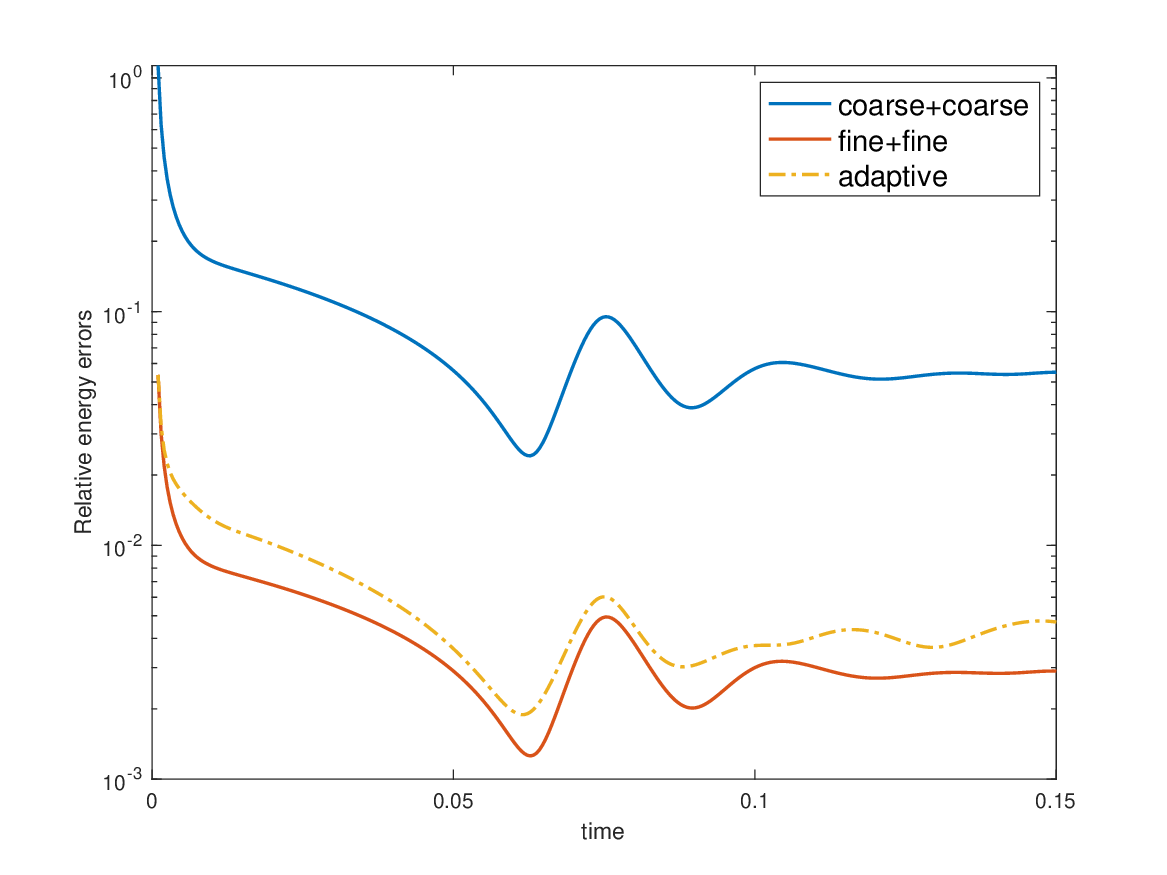}
\includegraphics[width=0.3\linewidth]{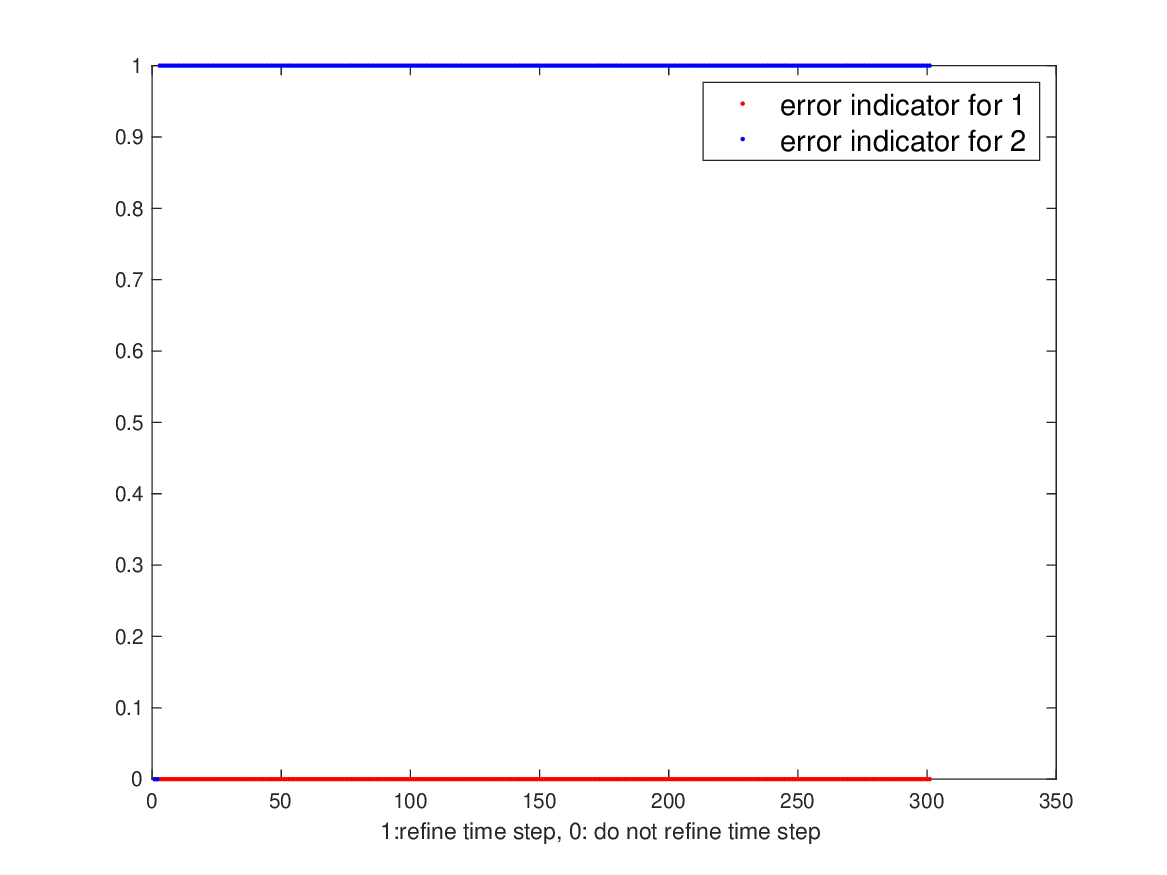}
\caption{Example 2. \textbf{Left:} relative $L_2$ error, \textbf{Middle:} relative energy error. \textbf{Right:} refinement history.}\label{ex2_errors}
\end{figure}

\subsection{Example 3: A three dimensional case}
\label{section: Numerical experiment 3}
In this example, we consider a three dimensional field, where the velocity field is a modified using the SEG Salt Model C3 wide azimuth classic dataset \cite{SaltModel}. The computational domain is adjusted to $\Omega = [0,1]^3$. We take $T=0.3$, $\Delta T=1\times10^{-4}$, $\Delta t=2\times10^{-5}$, $\delta t=1\times10^{-5}$, $h=0.02$, $H=0.1$. The degrees of freedom (DOF) for FEM is $132651$, and multiscale method is $8023$ where $\text{dim}(V_{aux,1}) = 23$ we take $l_i=8$ in $V_{aux,2}$. 
The source term is $f(t)f(x,y,z)$ where
\begin{align*}
    f(t)&=(1-2(\pi f_0 (t-t_0)^2)\exp(-(\pi f_0 (t-t_0))^2)\\
    f(x,y,z)&= \exp (-\frac{(x-x_0)^2+(y-y_0)^2+(z-z_0)^2}{2\sigma^2})
\end{align*}
where $(x_0,y_0,z_0)=(0.5,0.5,0.06), \sigma=0.03, f_0=5, t_0=0.22$.

We plot $\kappa$ and the solution at $T$ in Figure \ref{fig:ex3_kappa_f}. We remark that Dirichlet boundary condition is used in this example. Other practical cases including absorbing boundary conditions and PML can be considered in our future work.
\begin{figure}[H]
\centering
\includegraphics[width=0.45\linewidth]{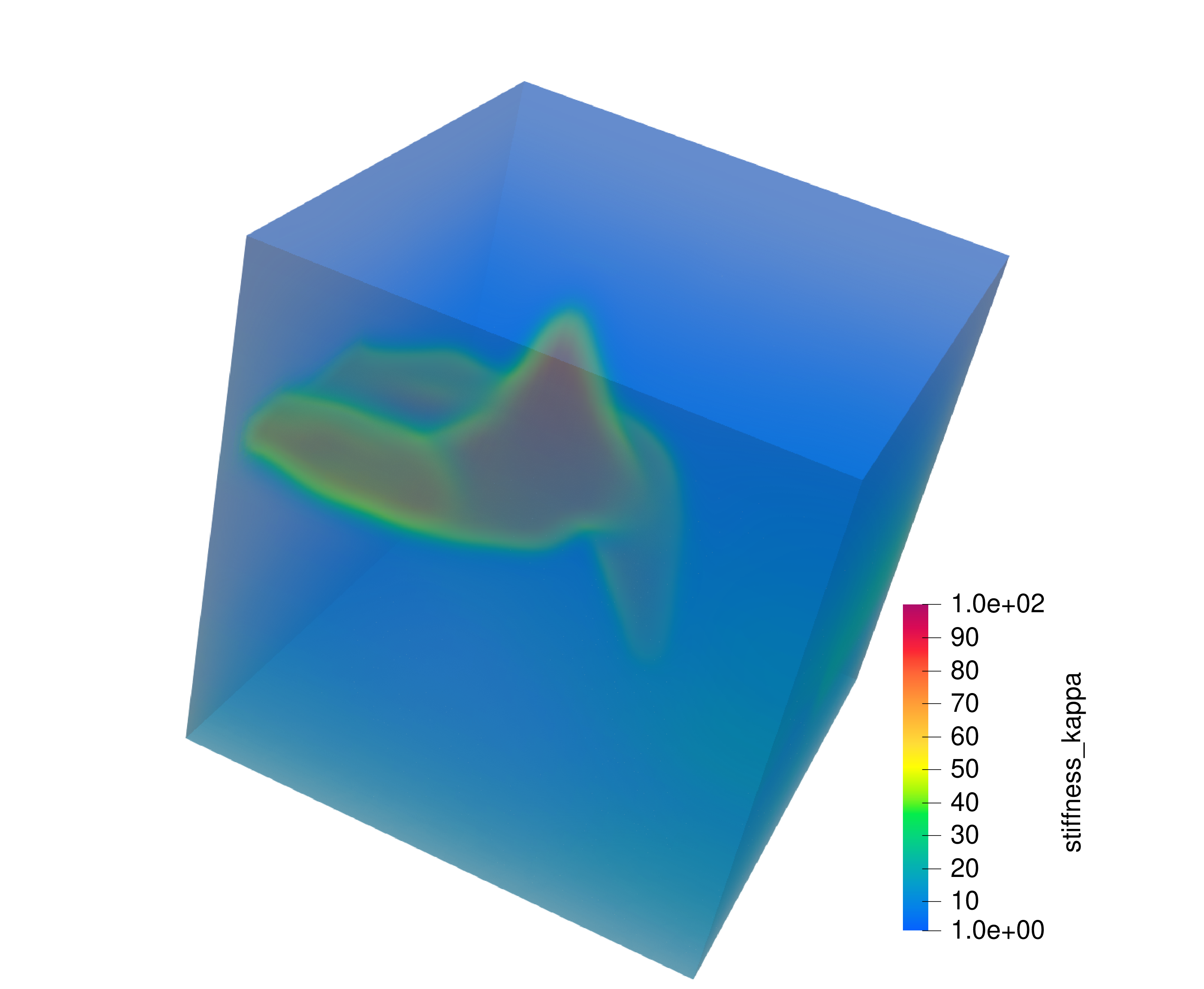}
\includegraphics[width=0.45\linewidth]{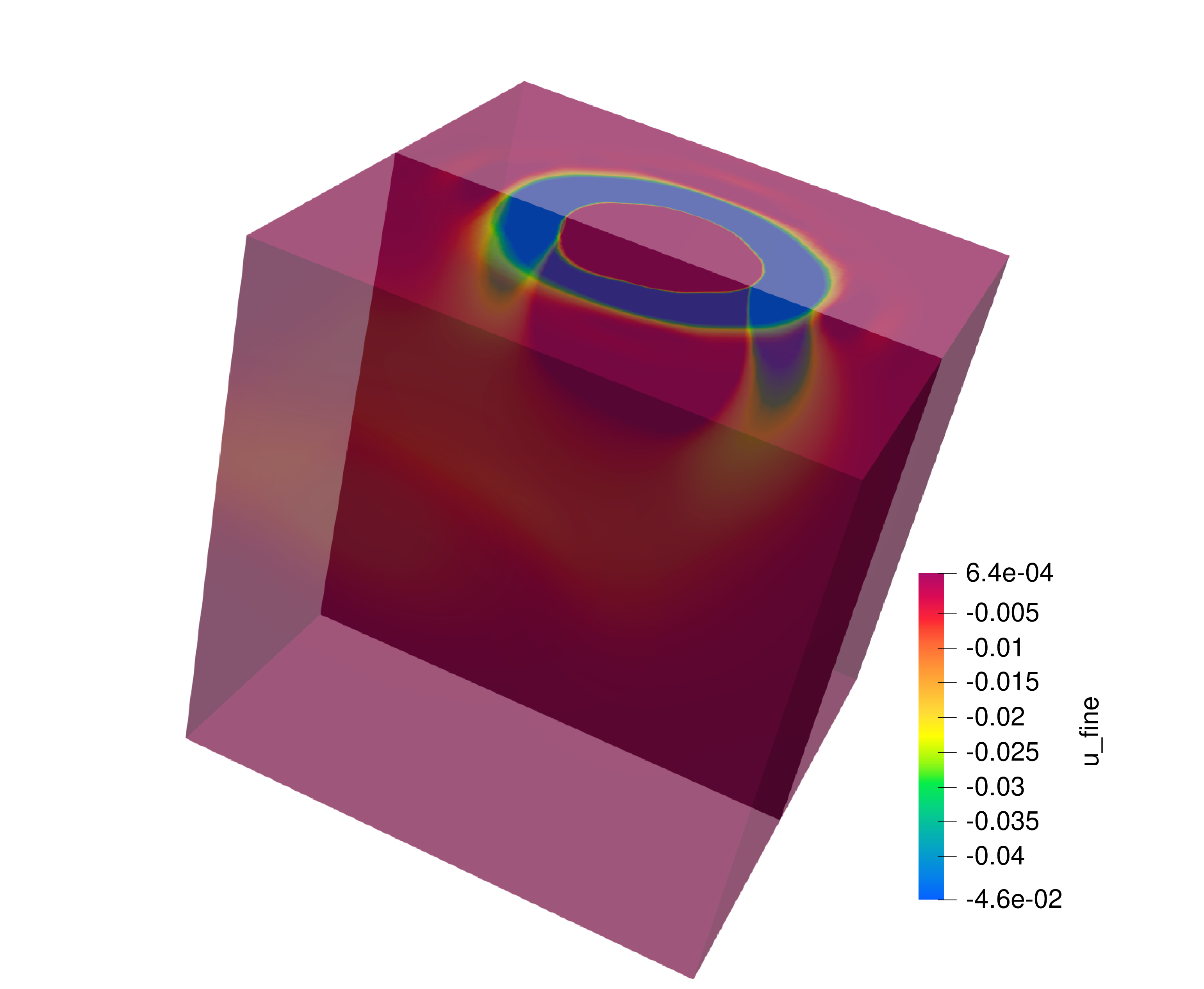}
\caption{Example 3. Left: $\kappa$. Right: the solution at $T$.}\label{fig:ex3_kappa_f}
\end{figure}

In the left of Figure \ref{fig:ex3_errors}, we show the error between the coarse solution and the FEM solution, when the mass lumping scheme is used or not used in the coarse scale system. It can be seen that the mass lumping error is relatively low, but the computational cost can be saved significantly in parctice.  We then compute the errors between the splitting scheme using uniform coarse time steps, using fine time steps and using multi-time stepping (in this example, we directly take `coarse-fine') with reference solutions, and the errors are presented in the middle and right of Figure \ref{fig:ex3_errors}. We see that the `coarse-fine' solution almost conincide with the `fine-fine' solution. For computational cost, we solve the linear systems in this example use PCG solvers. The wall time for fine scale finite element with implicit Euler scheme is around $4.5$ hours, and for CEM with implicit Euler with the same number of time steps is $2673.28$ seconds. However, our mass-lumped multi-rate partially explicit splitting scheme takes only $305.01$ seconds. This demonstrate that our proposed algorithm can achieve accuracy comparable to that of fine-fine solutions while reducing computational cost for the three dimensional case.

\begin{figure}[H]
\centering
\includegraphics[width=0.3\linewidth]{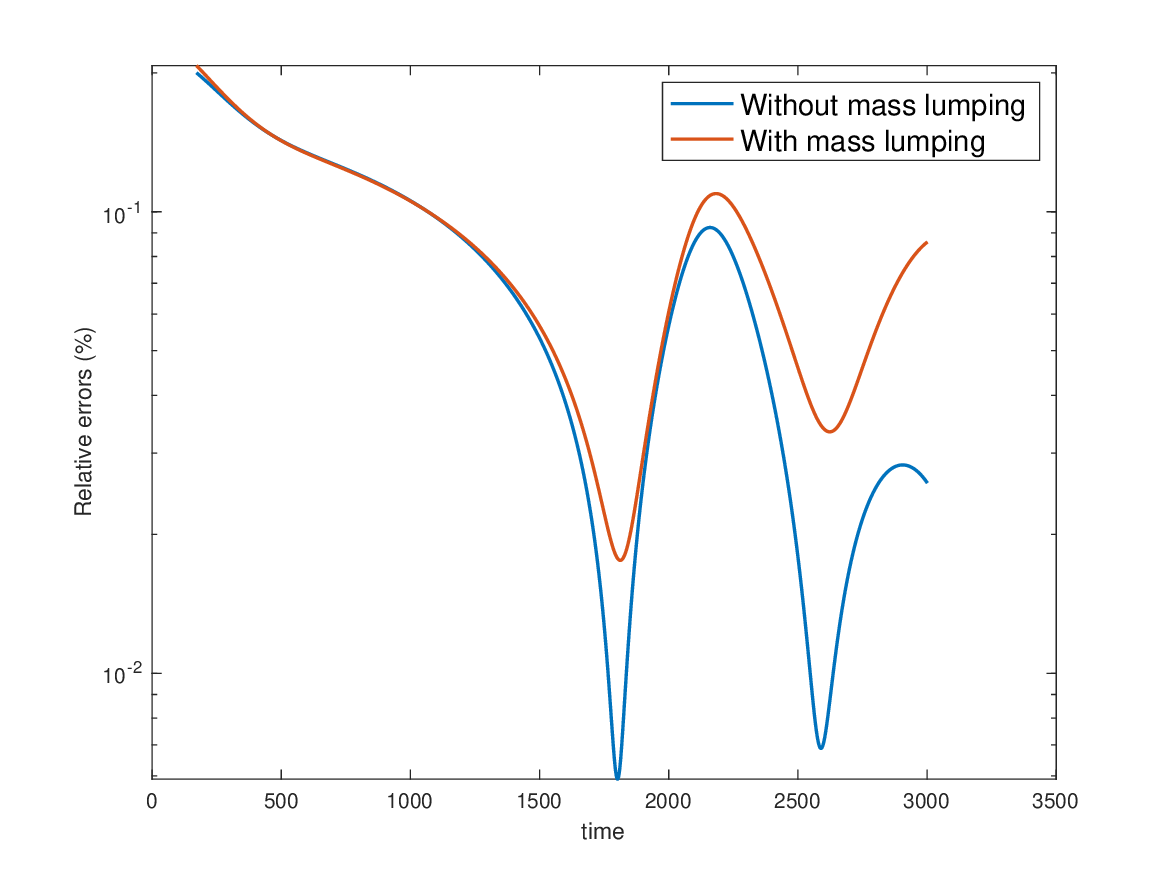}
\includegraphics[width=0.3\linewidth]{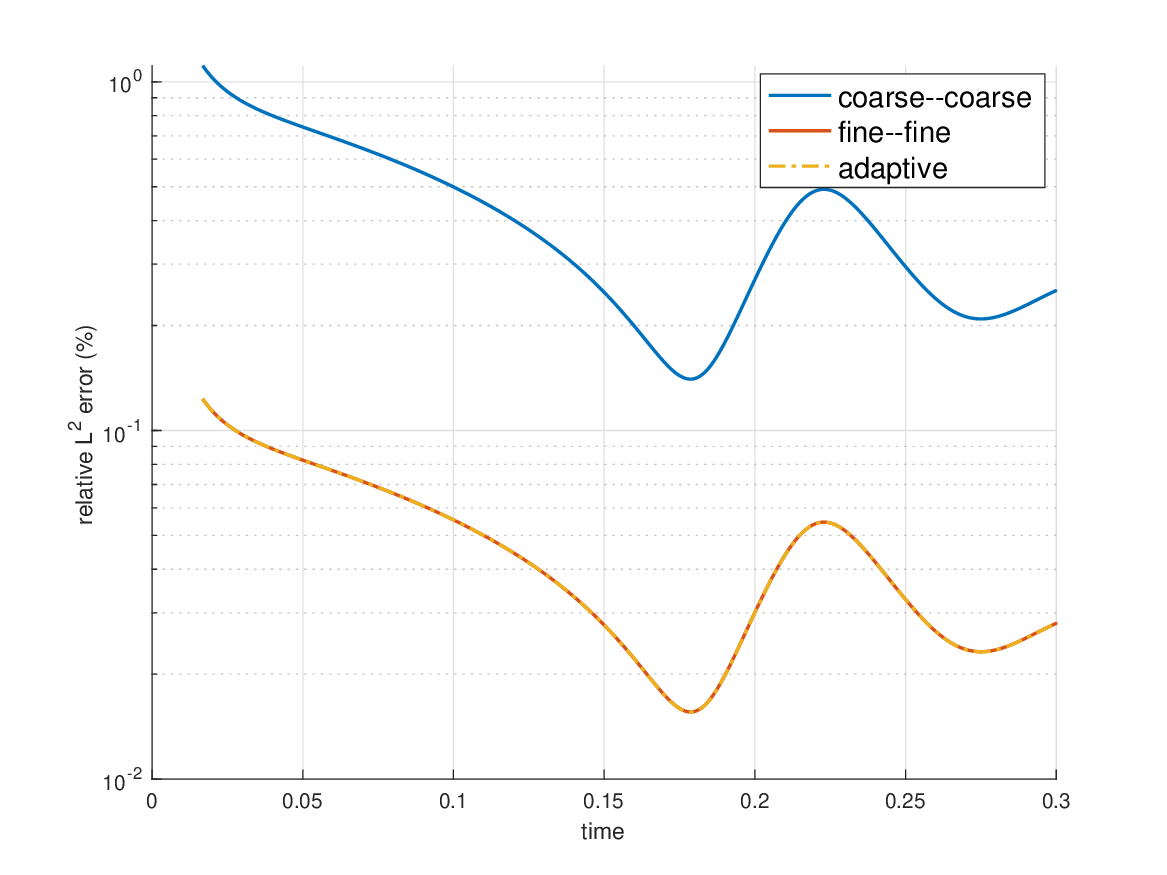}
\includegraphics[width=0.3\linewidth]{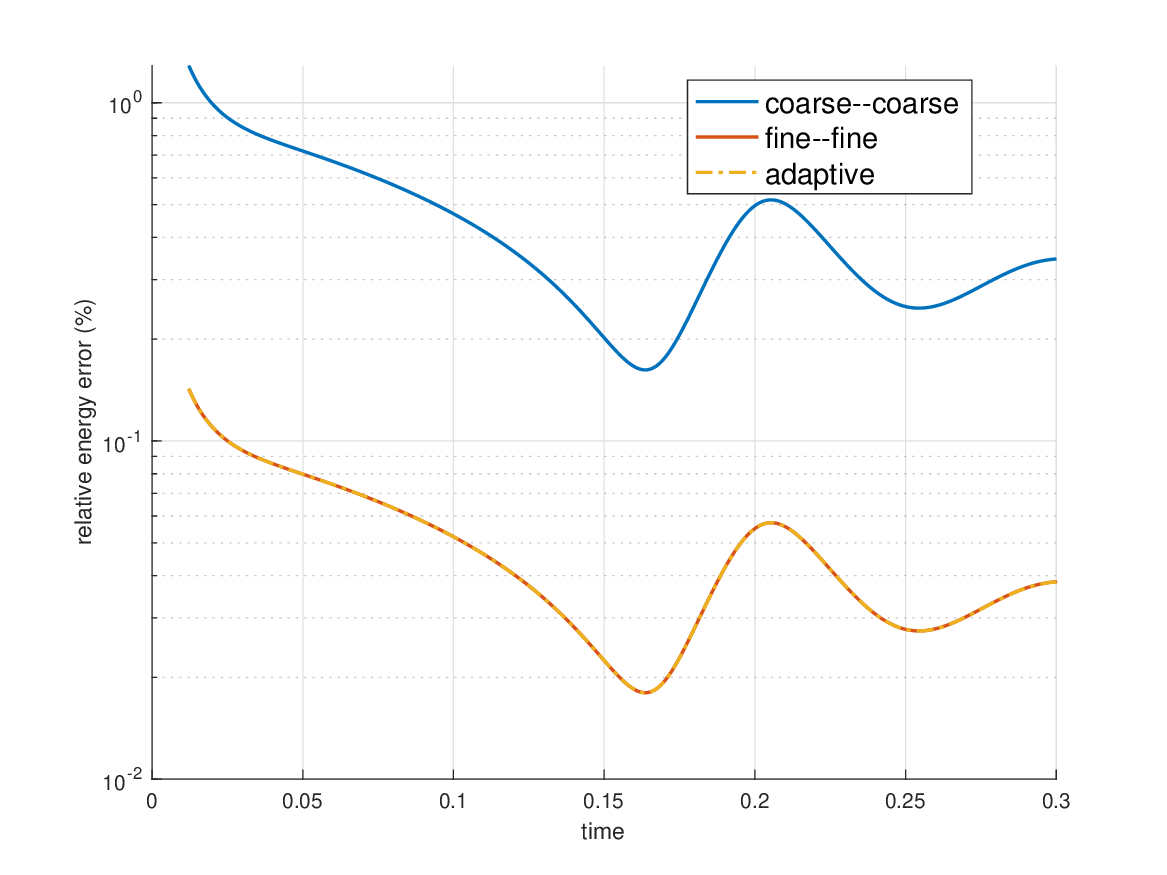}
\caption{Example 3. \textbf{Left:} mass lumping error, \textbf{Middle:} relative $L_2$ error. \textbf{Right:} relative energy error.}\label{fig:ex3_errors}
\end{figure}

\section{Concluding remarks}
\label{section: Concluding remarks}
In this work, we propose a multirate method for the partially explicit temporal splitting scheme to solve multiscale wave equations. First, the solution is decomposed into high contrast components and matrix components, which are respectively solved using implicit and explicit schemes. The resulting system is then decoupled through the mass lumping method. Next we introduce the multirate time splitting scheme, and analyze its stability results under different time step configurations. Finally we introduce a residual-based adaptive algorithm, derived the error indicators and provided an upper bound for the error. Numerical experiments demonstrate that the proposed algorithm adaptively refines the time step size based on the error indicators, thereby improving the approximation accuracy of the numerical solution while reducing computational cost.

\section*{Acknowledgments}
This work is supported by the Tianyuan Fund for Mathematics of the National Natural Science Foundation of China (Grant No. 12526212), the National Natural Science Foundation of China (NSFC Grant No. 12301559).

% \printbibliography
\section*{Appendix}\label{sec:appendix}
We give the proof of Theorem \ref{thm:adaptive} below.

\textbf{Proof:} 
% According to the definition of the residual, we have
% \begin{align*}
% \begin{aligned}
% &\int_{T_{n-1}}^{T_{n+1}}\left(R(U(t)),v\right)dt\\
% =&\int_{T_{n-1}}^{T_{n+1}}\biggl[(f(t),v)-(f^{n+1},v)-b(u''(t),v)-a(u(t),v)+\\
% &b(\frac{u_{H,1}^{n+1}-2u_{H,1}^{n}+u_{H,1}^{n-1}}{\Delta T^2},v_1)+b(\frac{u_{H,2}^{n+1}-2u_{H,2}^{n}+u_{H,2}^{n-1}}{\Delta T^2},v_2)\\
% &+a(u_{H,1}^{n}+u_{H,2}^{n},v_2)+\frac{1}{2}a(u_{H,1}^{n+1}+u_{H,1}^{n-1}+2u_{H,2}^{n},v_1)\biggr]dt.
% \end{aligned}
% \end{align*}
Since $b(u,v)=0$, $\forall v_1\in V_{H,1}$ and $v_2\in V_{H,2}$, then we get
\begin{align*}
b(u''(t),v)-b(\frac{u_{H,1}^{n+1}-2u_{H,1}^{n}+u_{H,1}^{n-1}}{\Delta T^2},v_1)-b(\frac{u_{H,2}^{n+1}-2u_{H,2}^{n}+u_{H,2}^{n-1}}{\Delta T^2},v_2)=0.
\end{align*}
The residual can then be written as follows
\begin{align*}
\int_{T_{n-1}}^{T_{n+1}}\left(R(U(t)),v\right)dt=\int_{T_{n-1}}^{T_{n+1}}\left((f(t),v)-(f^{n+1},v)+(R_1,v_1)+(R_2,v_2)\right)dt,
\end{align*}
where the term $\int_{T_{n-1}}^{T_{n+1}}(R_1,v_1)dt$
\begin{align*}
\begin{aligned}
=&\int_{T_{n-1}}^{T_{n+1}}\biggl[\frac{1}{2}a(u_{H,1}^{n+1}-2u_{H,1}^{n}+u_{H,1}^{n-1},v_1)-\frac{t-T_n}{2\Delta T}a(u_{H,1}^{n+1}-u_{H,1}^{n-1},v_1)\\
&-\frac{(t-T_n)^2}{2\Delta T^2}a(u_{H,1}^{n+1}-2u_{H,1}^{n}+u_{H,1}^{n-1},v_1)-\frac{t-T_n}{2\Delta T}a(u_{H,2}^{n+1}-u_{H,2}^{n-1},v_1) \\
&-\frac{(t-T_n)^2}{2\Delta T^2}a(u_{H,2}^{n+1}-2u_{H,2}^{n}+u_{H,2}^{n-1},v_1)\biggr]dt
\end{aligned}
\end{align*}
and the term $\int_{T_{n-1}}^{T_{n+1}}(R_2,v_2)dt$
\begin{align*}
\begin{aligned}
=&\int_{T_{n-1}}^{T_{n+1}}\biggl[-\frac{t-T_n}{2\Delta T}a(u_{H,1}^{n+1}-u_{H,1}^{n-1},v_1)-\frac{t-T_n}{2\Delta T}a(u_{H,2}^{n+1}-u_{H,2}^{n-1},v_1)\\
&-\frac{(t-T_n)^2}{2\Delta T^2}a(u_{H,1}^{n+1}-2u_{H,1}^{n}+u_{H,1}^{n-1},v_1)\\
&-\frac{(t-T_n)^2}{2\Delta T^2}a(u_{H,2}^{n+1}-2u_{H,2}^{n}+u_{H,2}^{n-1},v_1)\biggr]dt.
\end{aligned}
\end{align*}
With the help of the Cauchy-Schwarz inequality, the definition of two error indicators as in \eqref{G3} and by the definition of $X_n$ norm, we have
%we have the following estimates for \eqref{G7}
% \begin{align}\label{G9}
% \begin{aligned}
% \int_{T_{n-1}}^{T_{n+1}}(R_1,v_1)dt\leq &(\sqrt{\frac{\Delta T}{2}}+\sqrt{\frac{\Delta T}{6}}+\sqrt{\frac{\Delta T}{10}}) (\Vert u_{H,1}^{n+1}-u_{H,1}^{n}\Vert_a+\Vert u_{H,1}^{n}-u_{H,1}^{n-1}\Vert_a)\left(\int_{T_{n-1}}^{T_{n+1}}\Vert v_1\Vert_a^2dt\right)^{\frac{1}{2}}\\
% &+(\sqrt{\frac{\Delta T}{6}}+\sqrt{\frac{\Delta T}{10}}) (\Vert u_{H,2}^{n+1}-u_{H,2}^{n}\Vert_a+\Vert u_{H,2}^{n}-u_{H,2}^{n-1}\Vert_a)\left(\int_{T_{n-1}}^{T_{n+1}}\Vert v_1\Vert_a^2dt\right)^{\frac{1}{2}},
% \end{aligned}
% \end{align}
% and for \eqref{G8}
% \begin{align}\label{G10}
% \begin{aligned}
% \int_{T_{n-1}}^{T_{n+1}}(R_2,v_2)dt\leq&(\sqrt{\frac{\Delta T}{6}}+\sqrt{\frac{\Delta T}{10}}) (\Vert u_{H,1}^{n+1}-u_{H,1}^{n}\Vert_a+\Vert u_{H,1}^{n}-u_{H,1}^{n-1}\Vert_a\\
% &+\Vert u_{H,2}^{n+1}-u_{H,2}^{n}\Vert_a+\Vert u_{H,2}^{n}-u_{H,2}^{n-1}\Vert_a)\left(\int_{T_{n-1}}^{T_{n+1}}\Vert v_2\Vert_a^2dt\right)^{\frac{1}{2}}.
% \end{aligned}
% \end{align}

\begin{align*}
&\int_{T_{n-1}}^{T_{n+1}}(R_1,v_1)dt+\int_{T_{n-1}}^{T_{n+1}}(R_2,v_2)dt\\
\leq& C_1\eta_1^n\Vert v_1\Vert_{X_n}+C_2\eta_2^n\Vert v_2\Vert_{X_n}\leq C_3\left((\eta_1^n)^2+(\eta_2^n)^2\right)^{\frac{1}{2}}\left(\Vert v_1\Vert_{X_n}^2+\Vert v_2\Vert_{X_n}^2\right)^{\frac{1}{2}}.
\end{align*}
Note that
\begin{align*}
\Vert v_1+v_2\Vert_{X_n}^2\geq&\Vert v_1\Vert_{X_n}^2+\Vert v_2\Vert_{X_n}^2-2\gamma\Vert v_1\Vert_{X_n}\Vert v_2\Vert_{X_n}\\
\geq&\Vert v_1\Vert_{X_n}^2+\Vert v_2\Vert_{X_n}^2-\gamma(\Vert v_1\Vert_{X_n}^2+\Vert v_2\Vert_{X_n}^2)\\
=&(1-\gamma)(\Vert v_1\Vert_{X_n}^2+\Vert v_2\Vert_{X_n}^2),
\end{align*}
then we obtain
\begin{align*}
(\Vert v_1\Vert_{X_n}^2+\Vert v_2\Vert_{X_n}^2)^{\frac{1}{2}}\leq(1-\gamma)^{-\frac{1}{2}}\Vert v_1+v_2\Vert_{X_n}=(1-\gamma)^{-\frac{1}{2}}\Vert v\Vert_{X_n}.
\end{align*}
Therefore 
\begin{align*}
\int_{T_{n-1}}^{T_{n+1}}(R_1,v_1)dt+\int_{T_{n-1}}^{T_{n+1}}(R_2,v_2)dt\leq C_3(1-\gamma)^{-\frac{1}{2}}\left((\eta_1^n)^2+(\eta_2^n)^2\right)^{\frac{1}{2}}\Vert v\Vert_{X_n}.
\end{align*}
Finally we get
\begin{align*}
\mathcal{R}_v\leq&\int_{T_{n-1}}^{T_{n+1}}(f(t)-F(t),v)dt+C_3(1-\gamma)^{-\frac{1}{2}}\left((\eta_1^n)^2+(\eta_2^n)^2\right)^{\frac{1}{2}}\Vert v\Vert_{X_n}\\
% \leq&C_3(1-\gamma)^{-\frac{1}{2}}\left((\eta_1^n)^2+(\eta_2^n)^2\right)^{\frac{1}{2}}\Vert v\Vert_{X_n}+C_4\Vert f(t)-F(t)\Vert_{L^2((T_{n-1},T_{n+1}];L^2(\Omega))}\Vert v\Vert_{X_n}\\
\leq&C_3(1-\gamma)^{-\frac{1}{2}}\left((\eta_1^n)^2+(\eta_2^n)^2\right)^{\frac{1}{2}}+C_4\Vert f(t)-F(t)\Vert_{L^2((T_{n-1},T_{n+1}];L^2(\Omega))}.
\end{align*}
This completes the proof.

\bibliographystyle{plain}
\bibliography{lookup}
\end{document}